\LetLtxMacro\Oldfootnote\footnote
\crefname{enumi}{part}{parts}
\numberwithin{equation}{section}
\newtheorem{thm}[equation]{Theorem} 
\newtheorem{prop}[equation]{Proposition}
\crefname{prop}{Proposition}{Propositions}
\newtheorem{lemma}[equation]{Lemma} 
\newtheorem{cor}[equation]{Corollary}
\crefname{cor}{Corollary}{Corollaries}
\newtheorem{example}[equation]{Example}
\newtheorem{remark}[equation]{Remark}
\DeclareMathOperator{\Ext}{Ext}
\DeclareMathOperator{\im}{Im}
\DeclareMathOperator{\Span}{Span}
\DeclareMathOperator{\characteristic}{char}
\newcommand{\GL}{{\rm{GL}}}
\newcommand{\cchar}{{\rm{char\, }}}
\newcommand{\FF}{\mathbb F}
\newcommand{\RR}{\mathbb R}
\newcommand{\DOT}{\setlength{\unitlength}{1pt}\begin{picture}(2.5,2)
          (1,1)\put(2,3.5){\circle*{3}}\end{picture}}
\newcommand{\Hom}{\mbox{\rm Hom\,}}
\renewcommand{\ker}{\mbox{\rm Ker\,}}
\newcommand{\ot}{\otimes}
\newcommand{\cH}{\mathcal{H}}
\newcommand{\CC}{\mathbb{C}}
\newcommand{\del}{\partial}
\DeclareMathOperator{\codim}{codim}
\newcommand{\HH}{{\rm HH}}
\newcommand{\ZZZ}{{\rm Z}} 
\newcommand{\BBB}{{\rm B}} 
\newcommand{\CCC}{{\rm C}} 
\newcommand{\Wedge}{
{\textstyle \bigwedge}}
\newenvironment{smallpmatrix}
  {\left(\begin{smallmatrix}}
  {\end{smallmatrix}\right)}
\begin{document}


\begin{abstract}
We examine
the Hochschild cohomology
governing graded deformations 
for 
finite cyclic groups acting on polynomial
rings.
We classify the infinitesimal graded deformations
of the skew group algebra
$S\rtimes G$ for a cyclic group $G$
acting on a polynomial ring $S$.
This gives all graded deformations
of the first order. We are particularly interested in the case
when the characteristic of the underlying
field divides the order of the acting group,
which complicates the determination of 
cohomology.
\end{abstract}

\title[Deformation cohomology]
{Deformation cohomology
for cyclic groups\\ acting on polynomial rings
}

\date{January 2024}

\author{Colin M.\ Lawson}
\address{Department of Mathematics\\
Stephen F.\ Austin State University,
Nacogdoches, Texas 75965, USA}
\email{colin.lawson@sfasu.edu}

\author{Anne V.\ Shepler}
\address{Department of Mathematics, University of North Texas,
Denton, Texas 76203, USA}
\email{ashepler@unt.edu}

\thanks{Key Words: 
Hochschild cohomology,
deformations, skew group algebras,
modular invariant theory,
cyclic groups.}
\thanks{
MSC2010: 16E40, 16S35, 16E05, 16E30, 20C20.}

\thanks{The second author was partially supported by Simons grants 
429539 and 949953.}

\maketitle

\section{Introduction}
Hochschild cohomology governs
deformations of algebras:  Every deformation
arises from a Hochschild $2$-cocycle,
but the converse is false,
and obstructions to
lifting a $2$-cocycle
to a deformation are witnessed by the Gerstenhaber bracket
(a Lie bracket) on Hochschild cohomology.
The $2$-cocycles are often
called {\em infinitesimal
deformations}
or {\em deformations of the first order}.
When $A$ is a graded algebra,
the Hochschild cohomology $\HH^{\DOT}(A)$
inherits the grading,
and the graded deformations
of $A$ 
all arise from 
infinitesimal deformations
of graded degree $-1$
(see \cite{GS86}).
Thus a first step
in determining the graded deformations
of a given algebra $A$
centers on describing 
the space $\HH_{-1}^2(A)$
of
{\em infinitesimal graded deformations}, i.e., the space of
{\em first-order graded deformations}
of $A$.
%
The groups $\HH_{i}^j(A)$
are more generally
called
{\em graded Hochschild cohomology groups},
see \cite{Gordon08}.

We consider
a finite group $G\subset \GL(V)$
acting on a finite-dimensional
vector space $V\cong \FF^n$ over a field $\FF$
and take the induced
action on the polynomial ring $S(V)$,
the symmetric algebra of $V$.
Deformations of the natural
semidirect product algebra
$S(V)\rtimes G$
include Lusztig's
graded Hecke algebras
(see \cite{Lusztig88,Lusztig89}),
rational Cherednik algebras
and
symplectic reflection algebras
(see \cite{Gordon08} and \cite{EG}),
and Drinfeld Hecke algebras more
generally (see \cite{Drinfeld}).
The Hochschild cohomology of 
$A=S(V) \rtimes G$
has been described
in the {\em nonmodular setting},
when $\cchar \FF$ and 
the group order $|G|$ are coprime,
for $\FF$ algebraically
closed (see \cref{NonModularCohDeg2} below).
Much less is known in the {\em modular setting},
when $\cchar \FF$ divides $|G|$, as the group ring $\FF G$
is no longer semisimple and carries its
own nontrivial cohomology
(e.g., see \cite{CS97} and \cite{SiegelWitherspoon}).
Here we investigate the
case when the acting group $G$
is cyclic and we assume
$\cchar \FF\neq 2$.

\begin{thm}
\label{MasterThm}
Let $G\subset \GL(V)$ be a finite cyclic group acting on  $V \cong \FF^n$.
The space of 
infinitesimal
graded deformations of $A=S(V)\rtimes G$
is isomorphic as an $\FF$-vector space to
$$
\begin{aligned}
\HH^2_{-1}(A)
\ \cong\ 
(V^G/\im T)^*
\oplus
\big( 
V \ot\, \Wedge^2 V^*
\big)^G
\, \oplus\hspace{-2ex}
\bigoplus_{\substack{h \in G \\ \codim V^h =1 }}
\hspace{-3ex}
\left(\FF \oplus
\big(V/V_h \ot (V^h)^*\big)
\right)^{\chi_h}
\, \oplus\hspace{-2ex}
\bigoplus_{\substack{h\in G \\ \codim V^h =2 }}
\hspace{-3ex}
\left(
V/V_h
\right)^{\chi_h}
.
\end{aligned}
$$
\end{thm}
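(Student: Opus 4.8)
The plan is to compute $\HH^2_{-1}(A)$ by separating the Hochschild cohomology of the polynomial ring $S=S(V)$ from the group cohomology of $G$. First I would decompose $A$ as an $S$-bimodule, $A=\bigoplus_{g\in G} Sg$, where the summand $Sg$ is $S$ with its right action twisted by the automorphism $g$ induces on $V$; write $S_g$ for this twisted bimodule. This gives $\HH^\bullet(S,A)=\bigoplus_{g\in G}\HH^\bullet(S,S_g)$, and because $G$ is abelian the conjugation action of $G$ preserves each summand. I would then invoke the Hochschild--Serre-type spectral sequence for the skew group algebra,
$$
E_2^{i,j}=H^i\big(G,\ \HH^j(S,A)\big)\ \Longrightarrow\ \HH^{i+j}(A),
$$
which respects the internal grading, and extract the part of total degree $2$ and internal degree $-1$. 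In the nonmodular case this recovers the description in \cref{NonModularCohDeg2}, so that computation serves as a guide.

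Next I would compute each block $\HH^j(S,S_g)$ from the Koszul resolution of $S$: the relevant cochain complex is $\big(S\otimes\Wedge^\bullet V^*,\,d_g\big)$, whose differential is induced by multiplication by the entries of $1-g$. Setting $c=\codim V^g$, a graded-degree count shows that in internal degree $-1$ only the cohomological degrees $j=1,2$ survive, and within these only group elements with $c\in\{0,1,2\}$ contribute; this is exactly why the statement ranges over the identity together with the codimension-one and codimension-two loci, and why larger codimensions never appear. For semisimple $g$ the complex splits along eigenspaces and reproduces $\HH^\bullet(S,S_g)\cong S(V^g)\otimes\Wedge^\bullet(V^g)^*\otimes\Wedge^c(V_g)^*$, whose internal-degree-$(-1)$ pieces are $V\otimes\Wedge^2 V^*$ (for $g=e$), $V^g\otimes(V^g)^*\otimes(V_g)^*$ (for $c=1$) and $V^g\otimes\Wedge^2(V_g)^*$ (for $c=2$) in cohomological degree $2$, together with $V^*$ and $(V_g)^*$ in cohomological degree $1$.

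I would then apply the cyclic group cohomology, using that $H^\bullet(G,-)$ is $2$-periodic and governed by the norm $T=\sum_{g\in G}g$. The spot $E_2^{2,0}$ vanishes in internal degree $-1$ because $\HH^0(S,A)=S$ has nothing in that degree. The spot $E_2^{0,2}$ contributes $(\HH^2(S,A)_{-1})^G$, yielding $(V\otimes\Wedge^2 V^*)^G$ from the identity and the $\chi_h$-isotypic pieces for the codimension-one and codimension-two elements. The spot $E_2^{1,1}$ contributes $H^1(G,\HH^1(S,A)_{-1})$: for the identity component this is $H^1(G,V^*)$, which a direct computation (via $2$-periodicity and the annihilator identification $(\im T)^\perp/(V^G)^\perp\cong(V^G/\im T)^*$) identifies with $(V^G/\im T)^*$, giving the first term; for a codimension-one element $h$ it is $H^1\big(G,(V_h)^*\big)$, contributing the extra $\FF$ summand precisely when $\chi_h$ is trivial. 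Assembling the four matched contributions produces the stated direct sum.

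The two places I expect genuine difficulty are the modular block computation and the convergence of the spectral sequence. When $\cchar\FF$ divides $|G|$ an element $g$ need not be semisimple (for example a transvection with $V_h\subseteq V^h$), so the entries of $1-g$ fail to form a regular sequence and the eigenspace splitting breaks down; a careful analysis of the Koszul complex along the Jordan structure is required, and it is exactly this analysis that replaces the invariant space $V^g$ by the coinvariant space $V/V_h$ recorded in the statement. Finally, since the surviving classes live in $E_2^{0,2}$ and $E_2^{1,1}$, I would need to show that the differential $d_2\colon E_2^{0,2}\to E_2^{2,1}$ vanishes in internal degree $-1$ (equivalently, that these invariant Hochschild classes are permanent cycles), so that the listed spaces occur in full rather than as proper subspaces; the term $E_2^{1,1}$ survives automatically, its adjacent differentials landing in spaces that vanish in internal degree $-1$. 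Establishing this degeneration is the main obstacle to a clean proof.
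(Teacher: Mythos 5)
Your route is genuinely different from the paper's: the paper never invokes a spectral sequence, but instead builds an explicit periodic-twisted-Koszul resolution (\cref{PeriodicResolutionSection}), decomposes cochains group element by group element (\cref{CohDecomp}), and pins down unique cocycle representatives (\cref{UniqueReps}). Your outline via the spectral sequence of \cite{Stefan}, $E_2^{i,j}=H^i(G,\HH^j(S,A))\Rightarrow \HH^{i+j}(A)$, is plausible in shape, and several of your identifications check out: $H^1(G,V^*)=(\im T)^\perp/(V^G)^\perp\cong (V^G/\im T)^*$ is correct for cyclic $G$, and the spots adjacent to $E_2^{1,1}$ do vanish in internal degree $-1$ since $\HH^0(S,A)$ has no elements of negative degree. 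However, there are two genuine gaps. First, your diagnosis of the modular difficulty is wrong: the elements $(1-h)v$ span $V_h$, a space of linearly independent \emph{linear} forms, and such forms always constitute a regular sequence in $S(V)$, in any characteristic. The Koszul cohomology itself is therefore unproblematic and yields $S(V/V_h)$ in top degree $c=\codim V^h$ regardless of whether $h$ is semisimple. The true obstruction is that the splitting $V=V^h\oplus W$ used to factor the complex and write $\HH^j(S,S_h)\cong S(V/V_h)\ot \Wedge^{j-c}(V^h)^*\ot \Wedge^c W^*$ is not $G$-stable in the modular case (e.g., for transvections one has $V_h\subseteq V^h$), so this identification is not $G$-equivariant, and the $G$-module structure on $E_2$ --- hence the $\chi_h$-isotypic pieces appearing in the statement --- is not established by your sketch. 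This is exactly the ``key complication'' the paper flags in its introduction and resolves by hand through the normalization of representatives in \cref{UniqueReps}.

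Second, and more seriously, the degeneration you need is unproven and carries real content. The differential $d_2\colon E_2^{0,2}\to E_2^{2,1}$ lands in $H^2\big(G,\HH^1(S,A)_{-1}\big)$, and in internal degree $-1$ the coefficient module contains $V^*$ (from $h=1_G$) and the one-dimensional modules $(V/V^h)^*$ (from reflections); for a cyclic $p$-group in characteristic $p$ the cohomology $H^2(G,-)$ of such modules is typically \emph{nonzero}, so the target does not vanish for degree reasons and the vanishing of $d_2$ must be argued. You acknowledge this as ``the main obstacle,'' but without it your computation only bounds $\HH^2_{-1}(A)$ from above at the $E_2$ page, and the claimed isomorphism does not follow. (The extension problem, by contrast, is harmless since you only claim a vector-space isomorphism.) The paper avoids both issues entirely: its resolution computes $\HH^2_{-1}(A)$ directly, with the cocycle conditions of \cref{JacobiGrp} and \cref{CodimConditions} and the explicit mutually inverse maps $\Phi,\Phi'$ in the proof of \cref{MainThm} doing the work that your $E_2$-identification and degeneration argument would have to do. To complete your proof you would need either an independent degeneration argument in internal degree $-1$ together with an equivariant computation of $\HH^j(S,S_h)$ for nondiagonalizable $h$, or you would end up reconstructing something close to the paper's direct cocycle analysis.
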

\noindent
Here, $\chi_h$ is an analogue of the Hochschild character,
see \cref{LinearCharacter},
and $T:V\rightarrow V$
is the {\em transfer map}
$T=\sum_{h\in G} h$ on $V$.

We recover the description of
$\HH^2_{-1}(A)$
in the nonmodular setting for $\FF$  algebraically closed, 
see \cite{SW08}
and \cref{NonModularCohDeg2}
below.
In that setting, 
{\em bireflections} (with fixed point spaces of codimension $2$)
along with the identity $1_G$
contribute
to this space of infinitesimals,
but 
{\em reflections} in the group do not.
Over $\RR$ or $\CC$,
graded deformations like the symplectic reflection algebras
and rational 
Cherednik algebras 
(see \cite{Gordon08}
and \cite{EG})
have 
parameters only supported on bireflections
and $1_G$ for this reason.

In the modular setting, 
we find
{reflections} 
in the group $G$ 
also contributing to the space of
infinitesimal graded deformations.
We expect this since examples
(see 
 \cite{KrawzikShepler}
and \cite{SW18})
of
graded deformations
like Drinfeld Hecke algebras
arise from parameters supported
on reflections in the group, 
not just bireflections
and $1_G$.  In these graded deformations of
$A=S(V)\rtimes G$, the reflections
record some deforming
of the semidirect product
structure
whereas the bireflections
record some deforming of 
the commutativity of $S(V)$.
The key complication in the modular setting
centers on the frequent
lack of a decomposition
$V=V^h\oplus (V^h)^\perp$ 
preserved by the
centralizer $Z(h)$ of $h$ in $G$, even for
$G$ abelian
(see \cref{ApplicationSection}).
We obtain more information
in the modular setting when
a Jordan canonical form
is available, see
\cref{PartsNonmodular}.

Marcos, Mart\'inez-Villa, and  Martins
\cite{MMM04, MMM04Addendum}
(see also Cibils and Redondo \cite{CibilsRedondo05})
examine
the Hochschild cohomology
of semidirect products $S\rtimes G$
for an $\FF$-algebra $S$
upon which a finite group $G$ acts by automorphisms
and describe actions under which
$$
\HH^i(S\rtimes G)
\cong
\Big(\mathbin{\coprod}_{(g,h)\in G\times G}
\Ext^i_{S^e}(Sg, Sh) 
\Big)^{G\times G}.
$$
To describe the cohomology
explicitly,
we take a different approach here.
For a cyclic group $G$, 
we directly
construct a twisted product
resolution 
(see \cite{SW19}) of $A=S(V)\rtimes G$
that combines
a convenient periodic
resolution of 
$\FF G$ with the Koszul resolution 
of $S(V)$.  This approach allows
us to give a concrete
description of the cohomology, which
can then be used to determine deformations
that simultaneously
generalize graded Hecke algebras
and universal enveloping algebras,
see \cref{ApplicationSection},
\cite{Norton}, and \cite{SW12}.
Also see Negron~\cite{Negron15}
and Briggs and Witherspoon \cite{BW22}
for related ideas.
%

\subsection*{Outline of paper}
We recall some basic
facts about skew group algebras
and Hochschild cohomology in
\cref{background}
and review
a formulation of 
the
cohomology $\HH^{\DOT}(S(V) \rtimes G)$ in the nonmodular setting in
\cref{NonmodularSection}.
In \cref{PeriodicResolutionSection}, we 
use a periodic
resolution for a
cyclic group $G$ to construct
a twisted product resolution for $A=S(V)\rtimes G$ using the Koszul resolution for $S(V)$.
We decompose 
the space $\HH_{-1}^2(A)$
of
infinitesimal graded deformations
into contributions from
each group element
in \cref{DecompositionSection}.
We give cocycle conditions 
in \cref{CodimSection}
in terms of the dimension of
fixed point spaces.
Unique cocycle representatives
giving cohomology
are identified in \cref{UniqueRepsSection},
and we use these representatives
in \cref{MainTheoremSection}
to give the cohomology explicitly
as a vector space.
Lastly, in \cref{ApplicationSection},
we demonstrate how these results may be used
to find deformations
by considering the transvection
groups acting on $2$-dimensional
vector spaces.


\section{Hochschild cohomology
and skew group algebras}
\label{background}

We take a finite group $G\subset \GL(V)$
acting on $V\cong \FF^n$ and consider
the induced action of $G$ on the polynomial ring
$S(V)$.
We take all tensor
products over the field $\FF$,
$\ot = \ot_{\FF}$, 
and assume $\cchar \FF \neq 2$ throughout.  We assume
all algebras are associative
$\FF$-algebras.

\subsection*{Group actions}
For any $\FF G$-module $M$,
we write $\,^g m$ for the image 
of $m$ in $M$ under the action of 
$g$ in $G$ to distinguish
from the product of $g$ and $m$
in any algebra containing both.
We take the usual induced action
on functions $f:M\rightarrow M'$
defined by
$(\, ^g f)(m)=\, ^g(f(\, ^{g^{-1}}m))$ for $g$ in $G$ and $m$ in $M$,
for any $\FF G$-module $M'$,
and we always take the trivial $G$ action on $\FF$.
\subsection*{Transfer map}
We use the classical {\em transfer map} 
in modular
invariant theory 
(see \cite{CW11})
for a group $G\subset \GL(V)$ restricted to the vector space $V$: Define
\begin{equation}
    \label{TransferMap}
T:V\longrightarrow V,
\quad
v\longmapsto \sum_{h\in G} \ ^hv
\, .
\end{equation}

\subsection*{Invariant subspaces
and characters} 
We denote the $G$-invariants
in any $\FF G$-module $M$ by
$M^G =
\{m \in M 
:
\,^gm = m
\ 
\text{for all $g \in G$}
\, \}$
and, more generally,
denote
the $\chi$-invariants
in $M$ by
$$
M^\chi=\{m\in M: \, ^gm=\chi(g)\, m
\text{ for all } g \in G\}
\, 
$$
for any linear character $\chi:G\rightarrow \FF^{\times}$
of $G$.
Specifically for $M=V$
and $h$ in $G$, we set  
$$
V^h=\ker(1-h)=\{v \in V \,:\, \,^hv = v \}
\quad\text{ and }\quad
V_h=\im(1-h)=\{v- \,^hv \,:\, v\in V\}
\, .
$$
When $G$ is abelian,
$G$ fixes set-wise
both $V^h$ and $V_h$ for any $h$ in $G$
and
we define
a linear character
giving the determinant of
$G$ acting on $V/V^h$, an analogue of the 
{\em Hochschild character:} 
\begin{equation}
   \label{LinearCharacter}
\chi_h:G\longrightarrow \FF^{\times},
\quad
\chi_h(g) := 
\det
\left[
\begin{matrix}
g
\end{matrix}
\right]_{V/V^h}
\,.
\end{equation}

\subsection*{Skew group algebras}
Recall that the 
{\em skew group algebra} 
$S(V) \rtimes G=S\# G$
is the natural semidirect product algebra:
$S(V) \rtimes G=S(V) \ot \FF G$ as an $\FF$-vector space with multiplication given by 
$$
(s \ot g) \cdot (s' \ot g')
=
s(\,^gs') \ot gg'
\quad
\text{for all $s,s' \in S(V)$ and $g,g' \in G$}
\,.
$$
Note that we identify $1_G$ and $1_\FF$
in $\FF G$
and identify 
$\FF G$
with $\FF\ot \FF G$
and $S(V)$ with
$S(V)\ot \FF$, subspaces of
$S(V)\ot \FF G$.

\subsection*{Identifications} 
For any $\FF G$-module $M$,
we identify spaces under the
$\FF G$-module isomorphism
\begin{equation}
\label{identifications}
\Hom_{\FF}(\Wedge^jV, M)
\cong
M \ot \Wedge^jV^*
\quad
\text{for each $j \ge 0$}
\,
\end{equation}
for dual space $V^*=\Hom_{\FF}(V,\FF)$
so that
$
\left(\Hom_{\FF}(\Wedge^jV, M)\right)^G
=(M \ot \Wedge^{j}V^*)^G
$.
Any bimodule over an $\FF$-algebra $A$
is a left $A^e$-module for $A^e = A \ot A^{op}$, the enveloping algebra of $A$,
with $A^{op}$ the opposite algebra of $A$,
and we
also
identify 
the spaces
$\Hom_{\! A^e}(A\ot M\ot A, A)$
and
$\Hom_{\FF}(M, A)$.



\subsection*{Gradings}
The group $G$ acts on $S(V)$
by graded automorphisms
when we take the
natural grading on $S(V)$ 
by polynomial degree
with generators forming a vector space
basis of $V$ in degree $1$.
This grading
induces a grading on 
$A = S(V) \rtimes G$ 
after we set the
degree of each group element to zero.


\subsection*{Hochschild cohomology}
For an $\FF$-algebra $A$, the Hochschild cohomology of $A$ is 
$$
\HH^{\DOT}(A)
:=
\HH^{\DOT}(A, A) 
=
\Ext_{A^e}^{\DOT}(A,A)
\, .
$$
The cohomology
$\HH^{\DOT}(A)$ may be computed
as the homology of the complex
that arises from
applying
$\text{Hom}_{A^e}(\text{---}, A)$
to the bar resolution
of $A$ with $m$-th term
$A\ot A^{\ot m}\ot A$
for $m\geq 0$,
see \cite{W19}
and \cite{Redondo14} for example.

\subsection*{Grading
on cohomology}
If $A$ is a graded algebra, then
$\HH^m(A)$ inherits the induced grading
on the bar resolution \cite{W19},
and we denote the homogeneous
component of degree $i$ by
$\HH^m_{i}(A)$.
Specifically, we identify
$
\Hom_{\! A^e}(A\ot A^{m}\ot A, A)$ and $\Hom_{\FF}(A^{\ot m}, A)
$
and take the usual grading on
$A^{\ot m}$ with
$\deg(a_1\ot \cdots \ot a_m)
=\sum_j \deg(a_j)$
for $a_j$ homogeneous
in $A$,
so that
$\gamma$ in $\Hom_{\! A^e}(A\ot A^{m}\ot A, A)$ has degree $i$
when
$\deg (\gamma(a'\ot a \ot a''))
=i+\deg(a)$ for all $a\in A^{\ot m}$ and
$a', a''\in A$.


\subsection*{Graded deformations}
Recall that an
$\FF$-algebra $A_t$
is a graded deformation of a graded algebra $A$ over $\FF[t]$ 
if $A_t$ is a graded algebra
over  $\FF[t]$
for $\deg t=1$ 
with $A_t\cong A[t]$
as an $\FF[t]$-vector space
and $A_t/tA_t \cong A$
as an $\FF$-algebra
(see \cite{GS86}
and \cite{BG96}).
We may
identify a graded deformation
$A_t$ with
$\FF[t]\ot_\FF A$
with multiplication
given by
\begin{equation}\label{multiplication}
a*b=ab + \mu_1(a \ot b)t
+ \mu_2(a \ot b)t^2+ \ldots
\quad\text{ for } a,b\in A
\end{equation}
extended to be linear over $\FF[t]$
for some $\FF$-linear maps
$\mu_i: A\ot A \rightarrow A$
homogeneous of degree $-i$.
(Note this forces the sum in \cref{multiplication} to be finite
as $\mu_i(a\ot b)=0$
for large enough $i$.)
The first multiplication
map $\mu_1$ is 
then necessarily a Hochschild $2$-cocycle of $A$ of degree $-1$,
called the {\em infinitesimal}
of $A_t$.
Thus to classify all graded
deformations of $A$,
we are interested in first
determining the space
$$
\HH^2_{-1}(A) 
= \{ 
\text{infinitesimal
graded deformations of $A$}
\}.
$$ 
If an infinitesimal $\mu$
in $\HH^2_{-1}(A)$
is the first multiplication map
$\mu_1$ for a graded deformation
of $A$, then we say $\mu$
{\em lifts (or integrates) to a graded deformation} of $A$.

\subsection*{Koszul sign convention}
We use the {\em Koszul sign convention}
throughout for the tensor product of maps: 
If $f: A \to B$ and $f' : A' \to B'$ are homogeneous maps of graded vector spaces, then $f\ot f':A\ot A'\rightarrow B \ot B'$ is the map
satisfying, for homogeneous $a\in A$ and $a' \in A'$,
\begin{equation}
\label{KoszulSignConvention}
(f \ot f')(a \ot a') = (-1)^{\deg(a)\deg(f')} f(a) \ot f'(a')
\quad
\text{(Koszul sign convention)}
\, .
\end{equation}


\section{Cohomology in the nonmodular setting}
\label{NonmodularSection}
We review a description of the Hochschild cohomology for $A=S(V) \rtimes G$ in the nonmodular setting before investigating
the modular case.
Consider an arbitrary finite group $G\subset \GL(V)$ with
$\cchar \FF$ and $|G|$ coprime
and $\FF$
algebraically closed for $V\cong \FF^n$.
Let $C$ be a set of representatives of the conjugacy classes of $G$ and
let $Z(h)$ be the centralizer of $h$ in $G$.
As $|G|$ is invertible
in $\FF$,
there is a $G$-invariant
inner product on $V$
(obtained by averaging any inner product
over the group $G$).
Then for any $h$
in $G$,
$Z(h)$ preserves set-wise
both $V^h$ and $(V^h)^\perp$,
the orthogonal
complement to $V^h$,
and we may define
the
{\em Hochschild character}
$$\chi_h: 
Z(h) \to \FF,
\quad
z \mapsto \det(z|_{(V^{h})^\perp}),
$$
recording the determinant of
$Z(h)$
acting on 
$(V^h)^\perp$.
Note that
if $G$ is abelian,
we may identify 
the Hochschild character
$\chi_h$
with the linear character of \cref{LinearCharacter}
after identifying
the $\FF G$-modules $(V^h)^\perp$
and $V/V^h$ using the fact that
$V$ decomposes as
$V^h\oplus (V^h)^\perp$ as an 
$\FF G$-module
for $h$ in $G$.

\begin{prop}
\label{NonModularCohDeg2}
Consider a finite group $G\subset\GL(V)$ acting on $V \cong \FF^n$ with $|G|$ coprime to 
$\cchar \FF$ and $\FF$ algebraically closed.
For
$A=S(V)\rtimes G$,
\begin{equation*}
\begin{aligned}
\HH^{\DOT}(A)
\ &\cong\ 
\bigoplus_{h \in C}
\left(
S(V^h) 
\ot
\Wedge^{\DOT\, -\, \codim V^h}
(V^h)^*
\right)^{\chi_h}
\qquad\qquad
\text{and, in particular,}
\\
\HH^{2}(A)
\ &\cong\
\big( S(V)\ot \Wedge^2 V^*\big)^G
\ \oplus \hspace{-2ex}
\ \bigoplus_{
\substack{h \in C \\ \codim V^h=2
\\ \det h=1 \rule{0ex}{1.5ex}
}}
\hspace{-2ex}
\left(
S(V^h) \right)^{\chi_h}
\, .
\end{aligned}
\end{equation*}
\end{prop}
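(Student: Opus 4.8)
The plan is to exploit the semisimplicity of $\FF G$ in the nonmodular setting to reduce the problem to a twisted Hochschild cohomology of the polynomial ring alone, which is then resolved by the Koszul complex. First I would invoke the standard averaging argument: since $|G|$ is invertible in $\FF$, the Reynolds operator $\tfrac{1}{|G|}\sum_{g\in G}{}^{g}(-)$ splits off $G$-invariants and yields an isomorphism $\HH^{\DOT}(S\rtimes G)\cong \HH^{\DOT}(S,\,S\rtimes G)^{G}$, where $S=S(V)$ acts on $A=S\rtimes G$ on both sides and $G$ acts by conjugation. Decomposing $A=\bigoplus_{g\in G}Sg$ as an $S$-bimodule---left action ordinary, right action on the summand $Sg$ twisted by $g$---gives $\HH^{\DOT}(S,A)\cong\bigoplus_{g\in G}\HH^{\DOT}(S,S_{g})$, where $S_{g}$ denotes $S$ with its right multiplication twisted by $g$. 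The conjugation action permutes these summands, so passing to $G$-invariants collapses the sum to one over a set $C$ of conjugacy-class representatives, each contributing its centralizer invariants: $\HH^{\DOT}(S\rtimes G)\cong\bigoplus_{h\in C}\HH^{\DOT}(S,S_{h})^{Z(h)}$.

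Next I would compute each $\HH^{\DOT}(S,S_{h})$ from the Koszul resolution of $S$ over $S^{e}$. Applying $\Hom_{S^{e}}(-,S_{h})$ and using the identification \eqref{identifications} turns the resolution into the cochain complex $(S\ot\Wedge^{\DOT}V^{*},\,\partial_{h})$, whose differential is induced by the failure of the two $S$-actions on $S_{h}$ to agree, namely by the operator $1-h$ on $V$. Here the nonmodular hypothesis is essential: averaging an inner product produces a $Z(h)$-stable decomposition $V=V^{h}\oplus(V^{h})^{\perp}$ on which $1-h$ vanishes on $V^{h}$ and is invertible on $(V^{h})^{\perp}$. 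A K\"unneth argument then factors the complex as the tensor product of the zero-differential complex for $V^{h}$ with the complex for $(V^{h})^{\perp}$. The fixed factor contributes $S(V^{h})\ot\Wedge^{\DOT}(V^{h})^{*}$, while the moving factor is the Koszul complex on the regular sequence $\{(1-h)e_{i}\}$---a linear basis of $(V^{h})^{\perp}$---and so is acyclic except in top cohomological degree $\codim V^{h}$, where its cohomology is the one-dimensional top exterior power $\Wedge^{\codim V^{h}}\big((V^{h})^{\perp}\big)^{*}$. Reindexing yields $\HH^{j}(S,S_{h})\cong S(V^{h})\ot\Wedge^{\,j-\codim V^{h}}(V^{h})^{*}\ot\Wedge^{\codim V^{h}}\big((V^{h})^{\perp}\big)^{*}$.

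It then remains to take $Z(h)$-invariants and identify the character. Since $Z(h)$ acts on the one-dimensional factor $\Wedge^{\codim V^{h}}\big((V^{h})^{\perp}\big)^{*}$ by $z\mapsto\det(z|_{(V^{h})^{\perp}})^{-1}=\chi_{h}(z)^{-1}$, invariance of the full tensor product forces the remaining factor to transform by $\chi_{h}$, giving the first displayed isomorphism. For the degree-two statement I would set $j=2$, so that $\Wedge^{\,2-\codim V^{h}}(V^{h})^{*}$ is nonzero only for $\codim V^{h}\in\{0,1,2\}$. The case $\codim V^{h}=0$ occurs exactly for $h=1_{G}$ and produces $\big(S(V)\ot\Wedge^{2}V^{*}\big)^{G}$. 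When $\codim V^{h}$ is $1$ or $2$, the element $h$ itself lies in $Z(h)$ and acts trivially on $S(V^{h})\ot\Wedge^{\bullet}(V^{h})^{*}$, so a nonzero $\chi_{h}$-invariant forces $\chi_{h}(h)=\det(h|_{(V^{h})^{\perp}})=\det h=1$; this kills the codimension-one (reflection) terms, where $\det h\neq1$ on the one-dimensional moving space, and for codimension-two (bireflection) terms imposes precisely the condition $\det h=1$, leaving the summands $\big(S(V^{h})\big)^{\chi_{h}}$.

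The main obstacle is making the moving-factor computation and the determinant twist on the surviving top form fully precise, including the correct signs dictated by the Koszul convention \eqref{KoszulSignConvention}; once the $Z(h)$-stable orthogonal decomposition $V=V^{h}\oplus(V^{h})^{\perp}$ is in hand, the rest is formal. It is exactly the availability of this decomposition that fails in the modular setting, which is why the remainder of the paper must replace the Koszul-plus-averaging computation by an explicit twisted product resolution.
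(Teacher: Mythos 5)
Your proposal is correct and takes essentially the same route as the paper: the paper's proof of the first isomorphism simply cites \cite{Witherspoon06}, \cite{SW08}, \cite{Stefan}, and \cite{Farinati}, and your reduction to $\bigoplus_{h\in C}\HH^{\DOT}(S,S_h)^{Z(h)}$ followed by the Koszul computation with the $Z(h)$-stable splitting $V=V^h\oplus(V^h)^\perp$ and the determinant twist on $\Wedge^{\codim V^h}\big((V^h)^\perp\big)^*$ is precisely the argument carried out in those references. Your degree-two step --- observing that $h\in Z(h)$ acts trivially on $S(V^h)\ot\Wedge^{\DOT}(V^h)^*$, so nonzero $\chi_h$-invariants force $\chi_h(h)=\det h=1$, which eliminates the diagonalizable reflection terms --- is exactly the paper's own second step.
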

\begin{proof}
The arguments
in \cite[Section 6]{Witherspoon06}
and
\cite[Theorem 3.1]{SW08}
(see also \cite[Example 3.5.7]{W19})
using
\cite{Stefan}
give the
first isomorphism 
(also see \cite{Farinati}).
Note that each space
of $\chi_h$-invariants
is
$\{0\}$ unless
$\det(h)=\chi_h(h)=1$
since
$h$ itself acts
trivially
on $S(V^h) 
\ot
\Wedge^{\DOT\, -\, \codim V^h}
(V^h)^*$
(see \cite[Equation (3.7)]{SW08}).
Since any reflection in $G$
is diagonalizable
with determinant
$\neq 1$,
we thus
need only sum over 
group elements
whose fixed point spaces
have codimension $0$ or $2$
to find
$\HH^2(A)$.
\end{proof}

Under the induced
grading on $\HH^{\DOT}(A)$,
the graded component 
$\HH^m_{i}(A)$
is the subspace of $\HH^m(A)$
consisting of
elements whose first
tensor component has polynomial
degree $i+m$:
\begin{prop}
\label{NonModularCohCyclic}
Let $G\subset\GL(V)$ be a finite group acting on $V \cong \FF^n$ with $|G|$ coprime to 
$\cchar \FF$ and $\FF$ algebraically closed.
The space of 
infinitesimal
graded deformations of $A=S(V)\rtimes G$
is isomorphic as an $\FF$-vector space to
$$
\begin{aligned}
\HH^2_{-1}(A)
\ \cong\ 
(V \ot \Wedge^2V^*)^G
\ \oplus \hspace{-2ex}
\bigoplus_{\substack{h \in G\\ \codim V^h=2\\ \det h =1 \rule{0ex}{1.5ex}}}
\hspace{-2ex}
\left(
V^h
\right)^{\chi_h}
\,.
\end{aligned}
$$
\end{prop}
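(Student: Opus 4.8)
The plan is to read off $\HH^2_{-1}(A)$ directly from the explicit description of $\HH^2(A)$ furnished by \cref{NonModularCohDeg2}, simply extracting its homogeneous component of degree $-1$ under the induced grading. The only additional input is the grading characterization recalled just above the statement: the component $\HH^m_i(A)$ consists of those classes whose first tensor factor (the polynomial factor $S(V^h)$) has polynomial degree $i+m$. Since here $m=2$ and $i=-1$, the subspace $\HH^2_{-1}(A)$ is cut out by the single requirement that the first tensor factor have polynomial degree $i+m=1$.

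First I would treat the two families of summands in the second isomorphism of \cref{NonModularCohDeg2} separately. The identity contributes $(S(V)\ot \Wedge^2 V^*)^G$; its first tensor factor is $S(V)$, whose degree-one part is $V$, so the degree $-1$ component of this summand is $(V\ot \Wedge^2 V^*)^G$. Each $h$ with $\codim V^h=2$ and $\det h=1$ contributes $(S(V^h))^{\chi_h}$; here the exterior factor is $\Wedge^0(V^h)^*$, so the cohomological degree $2$ arises entirely from the codimension shift $\codim V^h=2$, and the first tensor factor $S(V^h)$ has degree-one part $V^h$, yielding $(V^h)^{\chi_h}$. Summing over the two families gives the claimed isomorphism, and no reflections appear since their contributions were already excluded in \cref{NonModularCohDeg2} by the condition $\det h=1$.

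The one point requiring care is that passing to a fixed polynomial degree commutes with taking $\chi_h$-invariants. Because $Z(h)$ acts by graded automorphisms on $S(V^h)$ (and $G$ acts by graded automorphisms on $S(V)$), every homogeneous component is a submodule, so the degree-one part of $(S(V^h))^{\chi_h}$ is precisely $(V^h)^{\chi_h}$, and likewise for the identity summand. I would also note that in the abelian setting relevant here each conjugacy class is a singleton and each centralizer is all of $G$, so the sum over class representatives $C$ in \cref{NonModularCohDeg2} becomes a sum over group elements $h\in G$ and the Hochschild character on $Z(h)$ agrees with the linear character $\chi_h$ of \cref{LinearCharacter}. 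With this grading bookkeeping in hand no further computation is needed; there is no serious obstacle, the only subtlety being the correct accounting of the codimension shift in the cohomological degree, which is already encoded in \cref{NonModularCohDeg2}.
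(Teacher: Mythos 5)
Your proposal is correct and is essentially the paper's own (implicit) argument: the paper states \cref{NonModularCohCyclic} with no separate proof, obtaining it exactly as you do by reading off from \cref{NonModularCohDeg2} the homogeneous component in which the first tensor factor has polynomial degree $i+m=1$, so that $S(V)$ contributes $V$ and each $S(V^h)$ contributes $V^h$. Your added remarks---that invariants commute with passing to a fixed degree since the actions are by graded automorphisms, and that in the abelian case the sum over conjugacy class representatives becomes a sum over $h\in G$ with $Z(h)=G$---are exactly the right bookkeeping and consistent with the paper's conventions.
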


For $G$ cyclic,
our main finding
\cref{MainThm} recovers
\cref{NonModularCohCyclic}
in this nonmodular setting,
see \cref{RecoverNonModResult}.
Note that the case where $S(V)$
is replaced by a quantum polynomial ring over fields of characteristic $0$ was explored
by Naidu, Shakalli, Shroff, and Witherspoon see \cite{NSW11,Shakalli12,NaW16,ShroffWi16}.
For deformations in that setting,
see also \cite{LevandovskyyShepler,SheplerUhl}.

\section{Periodic-twisted-Koszul Resolution for cyclic groups}
\label{PeriodicResolutionSection}
We consider a finite cyclic group $G$
acting linearly on $V \cong \FF^n$
for $\FF$ a field of arbitrary
characteristic.
To compute the Hochschild cohomology of $A=S(V) \rtimes G$, we use a twisted product resolution for $A$:
We twist together a periodic resolution for the group $G$ and the Koszul resolution for $S(V)$. 
See
\cite{SW14,SW19} for the construction details (and requirements) of twisted product resolutions for general skew group algebras. 
We fix a generator $g$ of $G$.

\subsection*{A periodic resolution for cyclic groups}
We may identify $\FF G$ with
$\FF[x]/(x^{|G|}-1)$ for some indeterminate $x$
and use a well-known resolution $P_{\DOT}$ of $\FF G$
given by
(see \cite{GGRSV91}) 
\begin{center}    
\noindent 
\begin{tikzcd}
P_{{\bullet}}:\ \  \cdots \arrow[r, "\gamma"] & \FF G \otimes \FF G \arrow[r, "\eta"] &  \FF G \otimes \FF G \arrow[r, "\gamma"] & \FF G \otimes \FF G  \arrow[r, "m"] & \FF G 
\arrow[r] & 0,
\end{tikzcd}
\end{center}
where 
$\gamma = g \otimes 1 - 1 \otimes g$, \ 
$\eta = g^{-1} \ot 1 + g^{-2} \ot g + \cdots + 1 \ot g^{-1}$ 
and $m$ is multiplication.

To satisfy the compatibility requirements for constructing a twisted product resolution
(see \cite[Defintion 2.17]{SW19}),
we use the following $G$-grading on $\FF G \ot \FF G$:
For any $h$ in $G$ and $P_i = \FF G \ot \FF G$
for $i \ge 0$, set
$$
(P_i)_{h}=
\begin{cases}
\Span_{\FF}\{a \ot b \ :\ ab = h \}
& \text{ for $i$ even, }
\\
\Span_{\FF}\{a \ot b \ :\ ab = hg^{-1} \}
& \text{ for $i$ odd}
\,.
\end{cases}
$$

\subsection*{The Koszul complex}
Recall the bimodule Koszul complex for the symmetric algebra $S(V)$: 
\begin{center}
\noindent \begin{tikzcd}
K_{{\bullet}}:
\ \cdots \longrightarrow S(V) 
\otimes \Wedge^2 V \otimes S(V) \longrightarrow  
S(V) \otimes V \otimes S(V) \longrightarrow S(V) \otimes S(V)  \longrightarrow  S(V) \longrightarrow  0
\,,
\end{tikzcd}
\end{center}
with differentials defined, for all $w_1 \wedge \cdots \wedge w_j$ in $\Wedge^j V$, by 
\begin{small}
$$
\begin{aligned}
\del_{K}
(
{\small 1 \ot 
w_1 \wedge \cdots \wedge w_j \ot 1})
&
=
\displaystyle{
\sum_{\ell=1}^j}
(-1)^{\ell-1}
(w_{\ell} \ot 
w_1 \wedge \cdots \wedge \hat{w}_{\ell} 
\wedge \cdots \wedge w_j \ot 1
-
1 \ot 
w_1 \wedge \cdots \wedge \hat{w}_{\ell} 
\wedge \cdots \wedge w_j \ot w_{\ell})
\,.
\end{aligned}
$$
\end{small}
This resolution satisfies the compatibility requirements 
of \cite{SW19} 
for a twisted product resolution.

\subsection*{Periodic-twisted-Koszul resolution}
The
periodic-twisted-Koszul resolution \
$X_{\DOT} = P_{\DOT}\ot^G K_{\DOT}$ of $A=S(V)\rtimes G$
is the total complex
(see \cite{SW14,SW19}) 
\begin{equation*}
X_m = \bigoplus_{i+j=m} X_{i,j}
\quad
\text{for}
\quad
X_{i,j} = P_i \ot K_j
=(\FF G \ot \FF G)_i \ot 
\big(
S(V) \ot \Wedge^jV \ot S(V)
\big)
\quad
\text{for $i,j \ge 0$}
\end{equation*}
with $A$-bimodule structure
on each $P_i\ot K_j$
given by
$$
s(y_1\ot y_2)a=y_1a\ot\, ^{(ha)^{-1}}s\ ^{a^{-1}}y_2
\qquad\text{ for }
y_1\in (P_i)_h,\ y_2\in K_j,
\ a,h\in G,\ s\in S(V)
\, 
$$
and differentials
$d_m: X_m \to X_{m-1}$
given by $d_m = \sum_{i+j=m} d^{\text{hor}}_{i,j} + d^{\text{vert}}_{i,j}$ for
horizontal and vertical
maps
$$
d_{i,j}^{\text{hor}}:=\del_{P} \ot 1_{K}: 
\,
X_{i,j} \to X_{i-1,j}
\ \ \
\text{and}
\ \ \ 
d_{i,j}^{\text{vert}}:=  1_{P} \ot \del_{K}:
\,
X_{i,j} \to X_{i, j-1}
\,.
$$
Here, $\del_{P}$ and $\del_{K}$ 
are the differentials for $P_{\DOT}$ and $K_{\DOT}$, 
respectively.
The complex $X_{\DOT}$ 
gives a free $A$-bimodule resolution of
$A=S(V)\rtimes G$:
\begin{equation*}
\begin{tikzcd}
X_{\DOT}: 
\ \ \ \cdots \arrow[r] & 
X_2 \arrow[r] &  X_1 \arrow[r] & X_0 \arrow[r] &
A \arrow[r] & 0
\, .
\end{tikzcd}
\end{equation*}

We identify 
each $X_{i,j}$ with
$A\ot \Wedge^j V \ot A$ 
using the $A$-bimodule isomorphism
$$
X_{i,j}\
\overset
{\cong}{\longrightarrow}\ \ 
A\ot \Wedge^j V \ot A
$$
given by,
for
$a'\ot a \in (P_i)_h$,
$w_1\wedge\cdots\wedge w_j\in \Wedge^j V$, and $s,r$ in $S(V)$,
$$
(a'\ot a)\ot (s\ot 
w_1\wedge\cdots\wedge w_j\ot r)
\longmapsto
(\, ^{h}s\ot a') \ot 
\, ^a\, 
(w_1\wedge\cdots\wedge w_j)\,
\ot (\, ^a r\ot a)
\, .
$$
Note that after the identifications,
the differential
encodes the group action
although the group algebra $\FF G$
does not appear in $X_{i,j}=A\ot \Wedge^j V \ot A$ as an overt tensor component.

The Hochschild cohomology $\HH^{\DOT}(A)$
is thus the homology
of the complex 
\begin{equation*}
\begin{tikzcd} 
\ \ \ \cdots \arrow[r] & 
\text{Hom}_{A^e}(X_2, A) 
\arrow[r] &  
\text{Hom}_{A^e}(X_1, A) 
\arrow[r] & 
\text{Hom}_{A^e}(X_0, A)\arrow[r] &
 0
\, .
\end{tikzcd}
\end{equation*}
The grading on each $\HH^{m}(A)$
induced by the grading
on $A$ coincides
with the grading
on $\Wedge V$
with $\Wedge^j V$ in degree $j$
so that
$\gamma\in 
\Hom_{A^e}(A\ot \Wedge^j V\ot A, A)$ has degree $i$
when
$\deg (\gamma(a\ot v_1\wedge\cdots\wedge v_j \ot a'))
=i+j$ for $v_1, \ldots, v_j\in V$ and
$a, a'\in A$.
We denote by $\CCC^m_i(A)$,
$\ZZZ^m_i(A)$, and $\BBB^m_i(A)$ 
the spaces of 
$m$-cochains, $m$-cocycles, and $m$-coboundaries, respectively, 
of graded degree $i$
so that
\begin{equation}
\label{CohomologyOfComplex}
\HH^2_{-1}(A)
\ \ 
=\ \ 
\ZZZ^2_{-1}(A)
/
\BBB^2_{-1}(A) \, .
\end{equation}
%
%

\subsection*{Explicit differential}
We give the differential in 
the resolution $X_{\DOT}$
explicitly.
We use the Koszul sign convention
(\cref{KoszulSignConvention}) 
with respect to homological degree
noting that $\del_{P}$ and $\del_{K}$ each have degree $-1$ while the identity has degree $0$:
For $y_1 \in P_i=\FF G \ot \FF G$
and $y_2$ in $S(V) \ot \Wedge^j V \ot S(V)$,
\begin{equation*}
(1\ot \del_{K})(y_1\ot y_2)
= (-1)^{(\deg y_1)(\deg \del_{K})}\big(y_1\ot \del_{K}(y_2)\big)
= (-1)^{i}\big(y_1\ot \del_{K}(y_2)\big)
\,.
\end{equation*}
The horizontal differentials 
$\del_{P}\ot 1_{K}$
on $X_{i,j}$ 
 are defined by
$$
d_{i,j}^{\text{hor}}(\overline{w})
\ =\ 
\begin{cases}
\ \ g \ot w_1 \wedge \cdots \wedge w_j \ot 1 - 1 \ot \ ^g(w_1 \wedge \cdots \wedge w_j) \ot g\ \ \ 
&
\text{if $i$ is odd,}
\\
\ \ 
\displaystyle{
\sum_{\ell=0}^{|G|-1}
g^{-1-\ell} \ot
\ ^{g^{\ell}} (w_1 \wedge \cdots \wedge w_j)
\ot g^{\ell}}
&
\text{if $i$ is even}
\,
\end{cases}
$$
and the vertical differentials $1_{P} \ot \del_{K}$
are defined by
(correcting a 
misprint in 
\cite[Example 4.6]{SW14})
$$
\begin{aligned}
d_{i,j}^{\text{vert}}( \overline{w})
=
\begin{cases} \,
\displaystyle{\ 
\sum_{\ell=1}^j}
(-1)^{\ell}
(\, ^g w_{\ell} \ot 
w_1 \wedge \cdots  \wedge \hat{w}_{\ell} 
\wedge \cdots \wedge w_j \ot 1
-
1 \ot 
w_1 \wedge \cdots \wedge \hat{w}_{\ell} 
\wedge \cdots \wedge w_j \ot w_{\ell})
&
\text{$i$ odd,}
\\
\, 
\displaystyle{\ 
\sum_{\ell=1}^j}
(-1)^{\ell-1}
(w_{\ell} \ot 
w_1 \wedge \cdots \wedge \hat{w}_{\ell} 
\wedge \cdots \wedge w_j \ot 1
-
1 \ot 
w_1 \wedge \cdots \wedge \hat{w}_{\ell} 
\wedge \cdots \wedge w_j \ot w_{\ell})
&
\text{$i$ even},
\end{cases}
\end{aligned}
$$
for 
$\overline{w} =1 \ot w_1 \wedge w_2 \wedge \cdots \wedge w_j \ot 1$ 
in $A \ot \Wedge^j V \ot A$.



\section{Cocycle conditions and cohomology decomposition}
\label{DecompositionSection}

We begin our analysis of
the space
of
infinitesimal graded deformations
with initial cocycle conditions.
These conditions allow us to 
decompose the Hochschild cohomology 
into contributions from each
group element.
Again, we consider a finite cyclic group
$G\subset \GL(V)$ acting
on $V \cong \FF^n$ and fix a generator $g$ of $G$
with which to construct the periodic-twisted-Koszul resolution 
$X_{\DOT}$ 
(see \cref{PeriodicResolutionSection})
of $A=S(V)\rtimes G$
giving the space $\HH_{-1}^2(A)$
of infinitesimal
graded deformations of $A$.
We examine the differential
of the 
 resolution $X = P_G \ot^G K_S$ 
of $A$
to determine preliminary cocycle conditions.

\subsection*{Decomposing cochains}
The space 
$\CCC^2_{-1}(A)
=
\big(
\Hom_{A^e}(X_2, A)
\big)_{-1}
$
of $2$-cochains of degree $-1$
decomposes
into a space of maps on $V$
and a space of maps on $\Wedge^2 V$:
\begin{equation}
\label{CochainDecomp}
\begin{aligned}
\CCC^2_{-1}(A)
=
\,
\Hom_{\FF}(V,\FF G) 
\oplus
\Hom_{\FF}(\Wedge^2 V, \,  V \ot \FF G)
=
(V^* \ot \FF G)
\oplus
(V \ot \Wedge^2 V^* \ot \FF G )  
\, ,
\end{aligned}
\end{equation}
since $X_2 =(A \ot A) \oplus (A \ot V \ot A) \oplus (A \ot \Wedge^2 V \ot A)$
and $\FF G$ is the degree $0$ component of $A$,
using the usual identifications
(see \cref{identifications}).
Note here that the only $A^e$-homomorphism
$A\ot A\rightarrow A$
of degree $-1$ is the zero map.

\subsection*{Cochains decomposed 
by group
contribution}

We decompose the vector space of cochains
according to group elements
with an extra {\em shift}
by
the generator $g$ of $G$.
From ~\cref{CochainDecomp},
$$
\begin{aligned}
\CCC^2_{-1}(A) =
\bigoplus_{h \in G}
\left(V^* \ot \FF h \right)
\oplus
\left(V \ot \Wedge^2 V^* \ot \FF h \right)
=
\bigoplus_{h \in G}
\left(V^* \ot \FF hg \right)
\oplus
\bigoplus_{h \in G}
\left(V \ot \Wedge^2 V^* \ot \FF h \right)
\,.
\end{aligned}
$$
For each $h \in G$, we set
$$
\CCC^2_{-1}(h) := \underbrace{
\big(V^* \ot \FF hg \big)}_{\text{$\lambda$-part}} 
\oplus \ \underbrace{\big(V \ot \Wedge^2 V^* \ot \FF h \big)}_{\text{$\alpha$-part}}
\qquad\text{ so that }
\qquad
\CCC^2_{-1}(A) = \bigoplus_{h \in G} \CCC^2_{-1}(h)
\,.
$$
Furthermore, we define the coboundaries and cocycles for $h \in G$ by
$$
\begin{aligned}
\BBB^2_{-1}(h) := \CCC^2_{-1}(h) \cap \BBB^2_{-1}(A)
\qquad
\text{and}
\qquad
\ZZZ^2_{-1}(h) := \CCC^2_{-1}(h) \cap \ZZZ^2_{-1}(A)
\,
\end{aligned}
$$
and define the cohomology for $h$ by
\begin{equation}
\label{IndividualCoh}
\HH^2_{-1}(h)  \ \ 
:= \ \ 
\ZZZ^2_{-1}(h)/\BBB^2_{-1}(h)
\, .
\end{equation}
We justify this terminology (and notation) in \cref{CohDecomp}
below by exhibiting
$\HH^2_{-1}(A)$
as the direct sum
of the $\HH^2_{-1}(h)$.
First, we establish two lemmas
that will be useful here and later.
We consider cocycle conditions
in the first lemma using the transfer map $T$ (see \cref{TransferMap})
and 
consider
coboundary conditions in the second.
We write
any cochain 
$\gamma$  
in  $\CCC^2_{-1}(h)$
as 
$\gamma=(\lambda\ot hg) \oplus (\alpha\ot h)$  
for $\lambda$ in $\Hom_{\FF}(V,\FF)$ and $\alpha$ in $\Hom_{\FF}(\Wedge^2 V,V)$.

\begin{lemma}[Cocycles]
\label{JacobiGrp}
For any $h$ in $G$,
a cochain
$(\lambda\ot hg)
\oplus (\alpha\ot h)$ in  $\CCC^2_{-1}(h)$
 is a cocycle in  $\ZZZ^2_{-1}(h)$ if and only if
$$
\begin{aligned}
&
(1)
&&
0
= \lambda(\im(T))
\quad
\text{\em in $\FF$}
\,,
\\
&
(2)
&&
0
= (\alpha-
\,^{g^{-1}}\alpha)(u \wedge v )
-
\lambda( v )(u - \,^{h}u)
+
\lambda( u )(v - \,^{h}v)
\quad
\text{\em in $V$}
\text{ for all $u,v \in V$, and}
\\
&
(3)
&&
0=
\alpha( u \wedge v )(w-\,^hw)
+
\alpha( v \wedge w )(u-\,^hu)
+
\alpha( w \wedge u )(v-\,^hv)
\quad
\text{\em in $S(V)$}
\text{ for all $u,v,w \in V$.}
\end{aligned}
$$
\end{lemma}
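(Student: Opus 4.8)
The plan is to compute the image of $\gamma=(\lambda\ot hg)\oplus(\alpha\ot h)$ under the coboundary map directly, using the explicit periodic-twisted-Koszul differential from \cref{PeriodicResolutionSection}. The cochain $\gamma$ lies in $\ZZZ^2_{-1}(h)$ exactly when $\gamma\circ d_3=0$ as a map $X_3\to A$, where $d_3\colon X_3\to X_2$ is the resolution differential. Since every cochain is an $A^e$-module homomorphism, it suffices to evaluate on the free generators $1\ot(\cdots)\ot 1$ of each summand $X_{i,j}$ with $i+j=3$, namely $X_{3,0}$, $X_{2,1}$, $X_{1,2}$, and $X_{0,3}$. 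The first observation is that $\gamma$ annihilates $X_{2,0}\cong A\ot A$ for degree reasons (the only degree $-1$ map there is zero), so every component of $d_3$ landing in $X_{2,0}$ contributes nothing: this kills all of $X_{3,0}$, whose differential is purely horizontal into $X_{2,0}$, together with the vertical part of $X_{2,1}$. Thus only three pieces survive, and they will yield exactly conditions (1), (2), and (3).

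First I would treat $X_{2,1}$, where only the even horizontal differential $d^{\text{hor}}_{2,1}$ remains. Applying the stated even formula to $1\ot v\ot 1$ gives $\sum_{\ell=0}^{|G|-1} g^{-1-\ell}\ot\,{}^{g^{\ell}}v\ot g^{\ell}$; evaluating the $\lambda$-part of $\gamma$ and using $A^e$-linearity collapses each surrounding group word $g^{-1-\ell}(hg)g^{\ell}$ to $h$ (using that $G$ is abelian), so the total becomes $\lambda\!\big(\sum_\ell {}^{g^{\ell}}v\big)\,h=\lambda(T(v))\,h$, and vanishing yields (1). Next I would treat $X_{0,3}$, where only the even vertical differential $d^{\text{vert}}_{0,3}$ remains; applying it to $1\ot u\wedge v\wedge w\ot 1$ and then the $\alpha$-part of $\gamma$, while using commutativity of $S(V)$ and the skew relation $h\,u={}^{h}u\,h$ to transport each group element past the inner vector, produces the cyclic sum of (3) as the coefficient of $h$.

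The remaining and most delicate piece is $X_{1,2}$, where both the odd horizontal and odd vertical differentials contribute. The horizontal image $g\ot u\wedge v\ot 1-1\ot\,{}^{g}(u\wedge v)\ot g$ feeds the $\alpha$-part and, after stripping a common factor of ${}^{g}$, yields the twisted difference $(\alpha-{}^{g^{-1}}\alpha)(u\wedge v)$; the odd vertical image feeds the $\lambda$-part and yields the terms $-\lambda(v)(u-{}^{h}u)+\lambda(u)(v-{}^{h}v)$. Here I would use that $G$ is abelian to commute $g$ past $h$ and to rewrite ${}^{hg}u-{}^{g}u={}^{g}({}^{h}u-u)$, so that both contributions land in the same $hg$-component; applying ${}^{g^{-1}}$ then combines them into condition (2).

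The main obstacle will be the bookkeeping rather than any conceptual difficulty: carefully tracking the Koszul signs attached to the vertical differentials, correctly transporting group elements and polynomials through the $A^e$-module identification $X_{i,j}\cong A\ot\Wedge^{j}V\ot A$ via ${}^{g}s\cdot g=g\cdot s$, and verifying that the $g$-shift in the indexing of the $\lambda$-part is precisely what makes every surviving term land in the intended group component ($h$ or $hg$). I would organize the argument by first recording, for each of the three surviving pieces, the image under $d_3$, then applying $\gamma$ term-by-term and collecting the coefficient of the single relevant group element; the three resulting equations are exactly (1), (2), and (3). The converse is immediate, since these three families of generators (together with those already killed on $X_{2,0}$) exhaust $X_3$, so $\gamma\circ d_3=0$ is equivalent to the simultaneous vanishing of all three.
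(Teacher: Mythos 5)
Your proposal is correct and takes essentially the same approach as the paper: both verify cocycle conditions by computing $\gamma\circ d_3$ against the explicit horizontal and vertical differentials of the periodic-twisted-Koszul resolution, with the $X_{2,1}$, $X_{1,2}$, and $X_{0,3}$ generators producing conditions (1), (2), and (3) respectively (and the components landing in $X_{2,0}$ killed for degree reasons, as the paper also notes). The only difference is organizational: the paper first records the three conditions for an arbitrary cochain in $\CCC^2_{-1}(A)$ and then decomposes $\lambda$ and $\alpha$ by group element and compares coefficients, whereas you work with the fixed-$h$ component from the start, which is harmless since the statement concerns a single $h$.
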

\begin{proof}
We first consider 
a cochain 
$\gamma=\lambda \oplus \alpha$  
in  $\CCC^2_{-1}(A)$
using \cref{CochainDecomp}
for $\lambda$ in $\Hom_{\FF}(V,\FF G)$ and $\alpha$ in $\Hom_{\FF}(\Wedge^2 V,V \ot \FF G)$.
We examine $d\gamma$ for
the differential $d$ on
the resolution $X_{\DOT}$
(see 
\cref{PeriodicResolutionSection})
and conclude that
$\gamma$ lies in
$\ZZZ^2_{-1}(A)$ if and only if
$$
\begin{aligned}
&
\bullet
&&
0=\lambda(\im T)
\,,
\quad
\\
&
\bullet
&&
0
=
g \alpha( u \wedge v) 
-
\alpha(\,^gu \wedge \,^g v ) g
-\,^gu \,
\lambda( v )
+
\lambda(v )u
+
\,^gv\, 
\lambda( u )
-
\lambda( u ) v
\ \text{ for all } u,v \in V,
\text{ and }
\\
&
\bullet
&&
0=
[u, \alpha( v \wedge w )]
+
[v, \alpha( w \wedge u )]
+
[w, \alpha( u \wedge v )]
\ \text{ for all } u,v,w \in V
\, ,
\end{aligned}
$$
where the bracket is the commutator in $A$.
Now decompose $\alpha$
and $\lambda$
according to group elements,
\begin{equation*}
\lambda(v) = \sum_{h\in G} \lambda_h(v) h
\qquad
\text{and}
\qquad
\alpha(v \wedge w) 
=
\sum_{h \in G} \alpha_h(v\wedge w)
\ot h
\qquad
\text{for all $v,w$ in $V$}
\, 
\end{equation*}
where $\lambda_h: V\rightarrow \FF$
and $\alpha_h:\Wedge^2V \rightarrow V$,
so that 
$\gamma=\sum_{h\in G}\gamma_h$
for $\gamma_h=(\lambda_{hg}\ot hg)
\oplus (\alpha_h \ot h)$.
We compare coefficients to see that $\gamma$ 
satisfies 
the above three conditions
if and only if
each
$\gamma_h$ 
does
if and only if
each $\gamma_h$
satisfies the conditions in  the lemma. 
Hence $\gamma$ 
lies in $\ZZZ^2_{-1}(A)$ 
if and only if each 
$\gamma_h$ 
lies in $\ZZZ^2_{-1}(A)$
and thus
in $\ZZZ^2_{-1}(h)$.
\end{proof}
%

\begin{lemma}[Coboundaries]
\label{coboundaries}
For any $h$ in $G$,
a cochain
$(\lambda \ot hg) \oplus (\alpha \ot h)$ in $\CCC^2_{-1}(h)$
is a coboundary
in $\BBB^2_{-1}(h)$ 
if and only if there is
some map $f : V \to \FF$ 
with
$$
\begin{aligned}
\lambda(u)=
f(u - \,^gu)
\quad \ \ 
\text{and}
\quad \ \ 
\alpha(u\wedge v)
=
f(v)(u-\,^hu)-f(u)(v-\,^hv) 
\quad\text{ for all $u,v\in V$}.
\end{aligned}
$$
\end{lemma}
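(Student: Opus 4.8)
The plan is to compute the coboundary map explicitly and read off its image, much as \cref{JacobiGrp} does for cocycles. Recall that $\BBB^2_{-1}(A)$ is the image of $d^{*}=\Hom_{A^e}(d,A)$ applied to the space $\CCC^1_{-1}(A)$ of $1$-cochains of degree $-1$. First I would identify $\CCC^1_{-1}(A)$. Since $X_1=X_{1,0}\oplus X_{0,1}$ with $X_{1,0}\cong A\ot A$ and $X_{0,1}\cong A\ot V\ot A$, and since a degree $-1$ cochain on $X_{i,j}\cong A\ot\Wedge^j V\ot A$ lands in the degree $j-1$ component of $A$, the summand $X_{1,0}$ (where $j=0$) contributes nothing, as the degree $-1$ part of $A$ is zero. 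Thus a degree $-1$ $1$-cochain is exactly a map $f\colon V\to\FF G$ into the degree $0$ part $\FF G=A_0$, and I write $f=\sum_{h\in G} f_h\, h$ with $f_h\colon V\to\FF$.

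Next I would compute $d^{*}f=f\circ d$ on the two relevant summands of $X_2$. Because $f$ vanishes on $X_{1,0}$, only the components of $d$ landing in $X_{0,1}$ survive: on $X_{1,1}\cong A\ot V\ot A$ only $d^{\mathrm{hor}}_{1,1}$ contributes, and on $X_{0,2}\cong A\ot\Wedge^2 V\ot A$ only $d^{\mathrm{vert}}_{0,2}$ contributes (since $d^{\mathrm{hor}}_{0,2}$ lands in $X_{-1,2}=0$). Using the explicit differentials of \cref{PeriodicResolutionSection}, I have $d^{\mathrm{hor}}_{1,1}(1\ot w\ot 1)=g\ot w\ot 1-1\ot \,^g w\ot g$, so the $\lambda$-part of $d^{*}f$ at $w$ is $g\,f(w)-f(\,^g w)\,g$; and $d^{\mathrm{vert}}_{0,2}(1\ot u\wedge v\ot 1)=u\ot v\ot 1-1\ot v\ot u-v\ot u\ot 1+1\ot u\ot v$, so the $\alpha$-part of $d^{*}f$ at $u\wedge v$ is the commutator expression $[u,f(v)]-[v,f(u)]$ in $A$.

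Then I would decompose by group element, using the identity $[u,h]=(u-\,^h u)\,h$ in $A$ together with the fact that $G$ is abelian, so $gh=hg$. The coefficient of $h$ in the $\alpha$-part becomes $\alpha_h(u\wedge v)=f_h(v)(u-\,^h u)-f_h(u)(v-\,^h v)$, and the coefficient of $hg$ in the $\lambda$-part becomes $\lambda_{hg}(u)=f_h(u-\,^g u)$. Matching these against the decomposition $\CCC^2_{-1}(h)=(V^*\ot\FF hg)\oplus(V\ot\Wedge^2 V^*\ot\FF h)$ used in \cref{JacobiGrp}, I observe that the full $h$-component $(d^{*}f)_h$ depends only on the single function $f_h$.

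Finally I would conclude: a cochain $(\lambda\ot hg)\oplus(\alpha\ot h)$ in $\CCC^2_{-1}(h)$ lies in $\BBB^2_{-1}(h)=\CCC^2_{-1}(h)\cap\BBB^2_{-1}(A)$ if and only if it equals $(d^{*}f)_h$ for some $f\in\CCC^1_{-1}(A)$, where the components $f_{h'}$ with $h'\neq h$ may be set to zero. By the formulas above this holds if and only if there is a map $f\colon V\to\FF$ (namely $f=f_h$) with $\lambda(u)=f(u-\,^g u)$ and $\alpha(u\wedge v)=f(v)(u-\,^h u)-f(u)(v-\,^h v)$, which is the asserted characterization. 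I expect the only delicate point to be the bookkeeping: tracking the group-grading shift by $g$ (so that the $\lambda$-part attached to $h$ carries the group element $hg$) and keeping the Koszul signs straight. Once the explicit differentials are in hand, the remaining computation is routine and parallels the cocycle calculation of \cref{JacobiGrp}.
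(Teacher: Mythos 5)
Your proposal is correct and takes essentially the same route as the paper's own proof: both compute the image under the differential of $X_{\DOT}$ of a generic degree $-1$ one-cochain $f=\sum_{h\in G}f_h\ot h$, arrive at the same formulas $\lambda_{hg}(u)=f_h(u-\,^gu)$ and $\alpha_h(u\wedge v)=f_h(v)(u-\,^hu)-f_h(u)(v-\,^hv)$, and conclude in both directions by observing that the $h$-component of the coboundary depends only on $f_h$. The only difference is cosmetic: you display the intermediate computation (which components of $d$ survive, and the commutator identity $[u,h]=(u-\,^hu)h$ together with commutativity of $G$) that the paper's proof leaves implicit.
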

\begin{proof}
We first write out
conditions for a generic
coboundary.  Say
$f'$ is a $1$-cochain in $\CCC^1_{-1}(A)=V^*\ot \FF G$
and write 
$f'=\sum_{h\in G} f_h\ot  h$
with each
$f_h\ot h\in 
V^*\ot \FF h\subset \CCC^1_{-1}(A)$.
Then
again using the differential
$d$ on the resolution $X_{\DOT}$
(see \cref{PeriodicResolutionSection}), 
we observe that
$$
df'=\sum_{h\in G}
d(f_h\ot h)
\quad\text{ with }\quad
d(f_h\ot h)
= 
(\lambda_{hg}\ot hg)
\oplus
(\alpha_h \ot h)
\,,
\rule[-3ex]{0ex}{3ex}
$$
for
$$
\lambda_{gh}(u)= f_h(u-\,^gu)
\quad\text{ and }\quad
\alpha_h(u\wedge v)
=f_h(v)(u-\, ^hu)
-
f_h(u)(v-\, ^hv)
\text{ for all } u, v\in V
\, .
$$
Then
each 
$(\lambda_{hg}\ot hg)
\oplus
(\alpha_h \ot h)
$
is a coboundary
in 
$\BBB^2_{-1}(h)=
\CCC^2_{-1}(h)\cap
\BBB^2_{-1}(A)$
and satisfies the
condition in the statement
of the lemma.
Thus for $h$ in $G$, if a cochain
$(\lambda \ot hg) \oplus (\alpha \ot h)$ in $\CCC^2_{-1}(h)$ is
$df'$ for some $1$-cochain $f'$ in $\CCC^1_{-1}(A)$,
then $f'=f\ot h$ for some
$f\in V^*$ satisfying
the conclusion of the lemma.
Conversely,
if there is a function $f$
as in the statement of the lemma, then  $f\ot h$ is a cochain in $C^1_{-1}(A)$
with $d(f\ot h)=
(\lambda \ot hg) \oplus (\alpha \ot h)$.
\end{proof}

\subsection*{Cohomology decomposed 
by group
contribution}

Now we may decompose
cohomology according to
group elements:
\begin{prop}
\label{CohDecomp}
Let $G\subset\GL(V)$ be a finite cyclic group acting on  $V \cong \FF^n$. Then
$$
\BBB^2_{-1}(A) 
\ =\ \bigoplus_{h \in G} \BBB^2_{-1}(h)
\quad
\text{and}
\quad
\ZZZ^2_{-1}(A)
\ =\ 
\bigoplus_{h \in G} \ZZZ^2_{-1}(h)
\, .
$$
Thus the space of infinitesimal 
graded deformations of $A=S(V)\rtimes G$
is
$$
\HH^2_{-1}(A)
\ \cong\ 
\bigoplus_{h \in G}
\HH^2_{-1}(h)
\,.
$$
\end{prop}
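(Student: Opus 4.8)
The plan is to assemble the statement from the two preceding lemmas, which already carry all the analytic content; what remains is purely formal bookkeeping around the direct-sum decomposition \cref{CochainDecomp} of the cochain space. The essential observation is that the differential $d$ on $X_{\DOT}$ respects the decomposition $\CCC^2_{-1}(A)=\bigoplus_{h\in G}\CCC^2_{-1}(h)$ \emph{with the built-in shift by the generator $g$}: the $\lambda$-part of $\CCC^2_{-1}(h)$ is indexed by $hg$ while its $\alpha$-part is indexed by $h$, and this is exactly the indexing that $d$ produces. Once $d$ is seen to be ``block diagonal'' for this group-element grading, both the cocycle and the coboundary spaces split along the sum, and the cohomology decomposition follows from the general fact that a quotient of a direct sum by a direct sum of subspaces decomposes as the direct sum of quotients.

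For the cocycle equality, the inclusion $\bigoplus_{h}\ZZZ^2_{-1}(h)\subseteq\ZZZ^2_{-1}(A)$ is immediate: each $\ZZZ^2_{-1}(h)=\CCC^2_{-1}(h)\cap\ZZZ^2_{-1}(A)$ consists of cocycles, and the sum is direct inside $\CCC^2_{-1}(A)$ by \cref{CochainDecomp}. For the reverse inclusion, I would take a cocycle $\gamma\in\ZZZ^2_{-1}(A)$ and write $\gamma=\sum_{h}\gamma_h$ with $\gamma_h\in\CCC^2_{-1}(h)$. The coefficient-comparison argument carried out in the proof of \cref{JacobiGrp} shows precisely that $\gamma$ satisfies the three cocycle conditions if and only if each $\gamma_h$ does, so every $\gamma_h$ lies in $\ZZZ^2_{-1}(h)$ and hence $\gamma\in\bigoplus_{h}\ZZZ^2_{-1}(h)$.

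The coboundary equality is handled the same way. Again $\bigoplus_{h}\BBB^2_{-1}(h)\subseteq\BBB^2_{-1}(A)$ is immediate. For the reverse, I would take $\beta\in\BBB^2_{-1}(A)$, write $\beta=df'$ for a $1$-cochain $f'=\sum_{h}f_h\ot h$ in $\CCC^1_{-1}(A)=V^*\ot\FF G$, and invoke the computation in the proof of \cref{coboundaries}, which yields $df'=\sum_{h}d(f_h\ot h)$ with each summand $d(f_h\ot h)\in\BBB^2_{-1}(h)$. Here the shift by $g$ is exactly what guarantees that the piece coming from $f_h\ot h$ lands in the summand indexed by $h$ rather than being scattered across several summands; this is why the shift was built into the definition of $\CCC^2_{-1}(h)$ in the first place.

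Finally, since $\BBB^2_{-1}(h)\subseteq\ZZZ^2_{-1}(h)$ for each $h$ (coboundaries are cocycles, and both spaces are intersected with the same summand $\CCC^2_{-1}(h)$), the two decompositions are compatible and I obtain
$$
\HH^2_{-1}(A)=\ZZZ^2_{-1}(A)/\BBB^2_{-1}(A)\cong\bigoplus_{h\in G}\ZZZ^2_{-1}(h)/\BBB^2_{-1}(h)=\bigoplus_{h\in G}\HH^2_{-1}(h).
$$
The only genuine obstacle is conceptual rather than computational: one must confirm that the group-element grading \emph{together with the $g$-shift} is preserved by $d$, so that the differential is block diagonal. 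This confirmation is precisely what the proofs of \cref{JacobiGrp,coboundaries} supply, so at the level of this proposition the argument is essentially a formal consequence of those two lemmas together with \cref{CochainDecomp}.
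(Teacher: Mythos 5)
Your proposal is correct and takes essentially the same route as the paper's own proof: both obtain the coboundary decomposition by writing $f'=\sum_{h}f_h\ot h$ and invoking the computation in the proof of \cref{coboundaries} to see each $d(f_h\ot h)\in\BBB^2_{-1}(h)$, and both obtain the cocycle decomposition from the coefficient comparison in the proof of \cref{JacobiGrp}, with the cohomology statement following formally. Your explicit remarks that the $g$-shift makes the differential block diagonal and that $\BBB^2_{-1}(h)\subseteq\ZZZ^2_{-1}(h)$ merely spell out what the paper leaves implicit.
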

\begin{proof}

To verify that
$\BBB^2_{-1}(A) \subset \bigoplus_{h \in G} \BBB^2_{-1}(h)$,
consider $df$ in
$\BBB^2_{-1}(A)$
with $f$ a $1$-cochain in $\CCC^1_{-1}(A)=V^*\ot \FF G$. We saw in the proof
of \cref{coboundaries}
that
$df=\sum_{h\in G}
d(f_h\ot h)$
where
$f=\sum_{h\in G} f_h\ot h$
with each summand
$f_h\ot h$ in $
V^*\ot \FF h\subset \CCC^1_{-1}(A)$
and 
$d(f_h\ot \FF h)$
in $\CCC^2_{-1}(h)\cap \BBB^2_{-1}(A) = \BBB^2_{-1}(h)$.
Thus $df$ lies in 
$
\bigoplus_{h\in G}\BBB^2_{-1}(h)$.
The reverse inclusion is clear.

To verify that 
$\ZZZ^2_{-1}(A) = \bigoplus_{h \in G} \ZZZ^2_{-1}(h)$, 
we refer to the proof of \cref{JacobiGrp}: 
For any
$\gamma=\sum_{h\in G}\gamma_h$ 
in $\CCC^2_{-1}(A)$ 
with
each $\gamma_h$ in $\CCC^2_{-1}(h)$,
 $\gamma$ lies in $\ZZZ^2_{-1}(A)$ if and only if each $\gamma_h$ lies in $\ZZZ^2_{-1}(h)$.
\end{proof}

\section{Cocycle conditions
in terms of codimension}
\label{CodimSection}

In this section, we detangle
the cocycle conditions
for the space 
$\HH_{-1}^2(A)$ of infinitesimal
graded deformations of $A=S(V)\rtimes G$ for
$G\subset\GL(V)$
a finite cyclic group
acting on  $V \cong \FF^n$.
We again fix a generator $g$
of $G$ to define
the resolution
$X_{\DOT}$, see~\cref{PeriodicResolutionSection}.

\subsection*{Vector space complements and projections}
%
%
We describe
$\HH^2_{-1}(A)$
using a choice of cohomology representatives
depending on
projection maps.
Recall
that a $G$-invariant
inner product on $V$ 
may not exist, but we use the notation of an orthogonal complement 
in any case in analogy with
the nonmodular setting.
We choose a vector space
complement $(V^h)^\perp$
to $V^h$ for each $h$ in $G$
so $V=V^h \oplus (V^h)^\perp$. 
Note that 
$V^g = V^G$ and so this gives a
decomposition $V=V^G \oplus (V^G)^\perp$.
We also
choose
a vector space complement
$(V_h)^\perp$
to
$V_h$
with projection map
$\pi_h: V \rightarrow V_h$:
\begin{equation}
    \label{ChoiceOfComplement}
V=\im(1-h)\oplus \im(1-h)^\perp = V_h \oplus (V_h)^\perp
\, .
\end{equation}

\subsection*{Cocycle condition (1)}

We interpret the first
cocycle condition of \cref{JacobiGrp}.
Recall that $h$ in $G$ is a {\em reflection}
when the fixed-point space $V^h$ is a hyperplane, i.e., $\codim V^h=1$.
Note that if $h$ is a reflection, then
either $h$
is diagonalizable with
order $|h|$ coprime
to $\cchar \FF$  
or 
$h$ is nondiagonalizable 
with order $|h|=\cchar \FF$
(see \cite{SmithBook}).
Also note that the following lemma fails
when $\cchar \FF=2=n=|G|$,
but we have excluded $\cchar \FF=2$
from consideration throughout.
\begin{lemma}
\label{ReflectionLemma}
If $G$ contains a nondiagonalizable reflection, then $\im T = \{0\}$.
\end{lemma}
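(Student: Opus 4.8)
The plan is to reduce the computation of the full transfer $T=\sum_{a\in G}a$ to the transfer over the cyclic subgroup $H=\langle h\rangle$ generated by the nondiagonalizable reflection, and to show that this smaller sum already vanishes as an operator on $V$. By the dichotomy recalled just before the statement, a nondiagonalizable reflection has order $|h|=\cchar\FF$; since $h$ has finite order this forces $\cchar\FF$ to be a prime $p$, and the standing hypothesis $\cchar\FF\neq 2$ makes $p$ odd. As $G$ is cyclic and $H\le G$, choosing coset representatives $c_1,\dots,c_m$ for $H$ in $G$ gives, in the group algebra $\FF G$, the factorization
\[
T=\sum_{a\in G}a=\Big(\sum_{i=1}^m c_i\Big)\Big(\sum_{j=0}^{p-1}h^j\Big),
\]
which maps, under the representation of $\FF G$ on $V$, to the composite of the operator $\sum_i c_i$ with $T_H:=\sum_{j=0}^{p-1}h^j$. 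It therefore suffices to prove $T_H=0$ on $V$.

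Next I would pin down the shape of $h$. Since $h$ is a reflection, $1-h$ has rank one, so $1-h=v\ot\phi$ for some $v\in V$ and $\phi\in V^*$, where $(v\ot\phi)(w)=\phi(w)\,v$. The eigenvalues of $1-h$ are then $\phi(v)$ and $0$ (with multiplicity $n-1$), so those of $h$ are $1-\phi(v)$ and $1$. If $\phi(v)\neq 0$ then $v\notin\ker\phi$ and $V=\FF v\oplus\ker\phi$, which diagonalizes $v\ot\phi$ and hence $h$; as $h$ is assumed nondiagonalizable we must have $\phi(v)=0$, i.e.\ $v\in\ker\phi=V^h$. Writing $\nu:=h-1=-\,v\ot\phi$, this gives $\nu^2=\phi(v)\,(v\ot\phi)=0$, so $h=1+\nu$ is unipotent with $\nu^2=0$ and $\nu\neq 0$.

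Finally, since $\nu^2=0$ we have $h^j=(1+\nu)^j=1+j\nu$ for every $j\ge 0$, whence
\[
T_H=\sum_{j=0}^{p-1}(1+j\nu)=p\cdot 1_V+\tbinom{p}{2}\,\nu .
\]
Here $p\cdot 1_V=0$ because $\cchar\FF=p$, and $\binom{p}{2}=\tfrac{p(p-1)}{2}$ is an integer multiple of $p$ precisely because $p$ is odd, so its image in $\FF$ is $0$ as well; hence $T_H=0$ and therefore $\im T=\{0\}$. The one genuinely delicate point, and the reason the hypothesis $\cchar\FF\neq 2$ is indispensable, is the vanishing of the coefficient $\binom{p}{2}$: for $p=2$ one would instead get $T_H=\nu\neq 0$, which is exactly the failure flagged in the remark preceding the lemma. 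Everything else is a routine rank-one computation.
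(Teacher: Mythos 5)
Your proof is correct and takes essentially the same route as the paper: reduce the transfer to the subgroup $H=\langle h\rangle$ via a coset decomposition of $G$, then show $\sum_{j=0}^{p-1}h^j=0$ on $V$ using the unipotence of $h$ and the vanishing of $\tbinom{p}{2}=\tfrac{p(p-1)}{2}$ in odd characteristic $p$. The only cosmetic difference is that you derive $h=1+\nu$ with $\nu^2=0$ coordinate-free from the rank-one operator $1-h$ (in passing proving the transvection normal form), where the paper simply writes $h$ in such a basis; the key arithmetic step is identical.
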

\begin{proof}
Let $h$ be a nondiagonalizable
reflection with $H
=\langle h\rangle \subset G$
and let 
$p=\cchar \FF=|h|$.
We write $T^G=\sum_{a\in G} a$
and $T^H=\sum_{a\in H} a$
as transformations on $V$
and note that for
a transversal $g_1, g_2, \dots, g_m$ 
of $H$ in $G$
(coset representatives
for $G/H$)
with $m = [G:H]$
(see \cite[Theorem 4.1]{Smith97}
and \cite{ShankWe99}),
$$
T^G(v) = \sum_{a \in G} \,^a v 
=
\sum_{i=1}^{m} 
\sum_{a\in H}
\,^{g_i a} v
=
\sum_{i=1}^{m} 
\,^{g_i}
\Big(
\sum_{a\in H}
\,^{a} v
\Big)
=
\sum_{i=1}^{m} \,^{g_i}\left( T^H(v) \right)
\text{ for } v\in V
\,.
$$
We argue that $ \im T^G$ is zero by showing $\im T^H$ is zero.  
There is a basis $v_1, \dots, v_n$ of $V$ with
$$
\,^hv_i = v_i 
\quad
\text{for $i<n$}
\qquad
\text{and}
\qquad
\,^hv_n = v_1+ v_n 
\,.
$$
Then $T^H(v_i)=|H| \, v_i = 0$ for $i<n$ and
$$
T^H(v_n) = \sum_{j=0}^{p-1} \,^{h^j} v_n
=
\sum_{j=0}^{p-1}
(jv_1 + v_n)
=
\tfrac{p(p-1)}{2}\ v_1
=
0
\,
\qquad \text{as well.}
$$

\vspace{-3ex}${}_{}$

\end{proof}

\begin{example}{\em 
Note that
$\im T\neq \{0\}$ for
$G\subset \GL_4(\FF_3)$ generated
by
$
\begin{smallpmatrix}
1 & 1 & 0 & 0 \\
0 & 1 & 1 & 0 \\
0 & 0 & 1 & 0 \\
0 & 0 & 0 & -1 \\
\end{smallpmatrix}
\, ,
$
but $\im T = \{0\}$
for 
$G\subset \GL_3(\FF_3)$ generated
by $g=
\begin{smallpmatrix}
1 & 1 & 0 \\
0 & 1 & 0 \\
0 & 0 & -1 \\
\end{smallpmatrix}
\, 
$
by ~\cref{ReflectionLemma}
since $g^4$
is a nondiagonalizable reflection.
}
\end{example}

\subsection*{Cocycle condition (3)}

We also interpret the third
cocycle condition of \cref{JacobiGrp}
for cochains.
Recall that we write
any $\gamma$ in
$C^2_{-1}(h)
=
(V^* \ot \FF hg )
\oplus (V \ot \Wedge^2 V^* \ot \FF h )
$
as 
$\gamma=(\lambda\ot hg) \oplus (\alpha\ot h)$  
for $\lambda$ in $\Hom_{\FF}(V,\FF)$ and $\alpha$ in $\Hom_{\FF}(\Wedge^2 V,V)$.

\begin{lemma}
\label{CodimConditions}
For any $h$ in $G$,
if a cochain $\gamma=(\lambda \ot hg) \oplus (\alpha \ot h)$ in $\CCC^2_{-1}(h)$ satisfies \cref{JacobiGrp}(3),
then either
\begin{enumerate}
    \item[(a)] $h$ is the identity element of $G$, or
    
    \item[(b)] $\codim V^h = 1$ and $\alpha(u \wedge v)=0$ for all $u,v$ in $V^h$, or
    
    \item[(c)]
    $\codim V^{h}=2$ and $\alpha(u\wedge v)$ lies in $\FF$-span$\{v-\,^hv\}$ for $u \in V^h$ and $v \in V$, or
    
    \item[(d)]
    $\codim V^h>2$ and $\alpha(u\wedge v)$ lies in $\FF$-span$\{u-\,^hu,\ v-\,^hv\}$ for all $u,v \in V$.
\end{enumerate}
\end{lemma}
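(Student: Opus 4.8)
The plan is to read condition \cref{JacobiGrp}(3) as a relation in the degree-two component $S^2(V)$ of the polynomial ring $S(V)$, among products of linear forms, and to exploit that $S(V)$ is an integral domain. Throughout write $D_h = 1 - h$, so $D_h(v) = v - \,^{h}v$, $\im D_h = V_h$, $\ker D_h = V^h$, and $\dim V_h = \codim V^h$ by rank-nullity. Since $S(V)$ is a polynomial ring, two linear forms satisfy $a\cdot z = 0$ in $S^2(V)$ only if $a=0$ or $z=0$; and if $p,q \in V$ are linearly independent, then $a\cdot q = b\cdot p$ in $S^2(V)$ forces $a \in \FF p$ and $b \in \FF q$ (indeed $p$ divides $a\cdot q$ and is coprime to $q$, so $p$ divides the degree-one element $a$, whence $a \in \FF p$). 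These two divisibility facts are the only algebraic inputs; everything else is a judicious choice of the arguments $u,v,w$ in condition (3), governed by $\dim V_h$. As $\codim V^h$ takes the value $0$, $1$, $2$, or exceeds $2$, and is determined by $h$, it suffices to establish the corresponding conclusion in each range.

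If $h = 1_G$ then $D_h = 0$ and (3) holds vacuously, giving case (a). Assume $h \neq 1_G$, so $V_h \neq 0$. First I would specialize (3) to $u,v \in V^h$ and arbitrary $w$: the terms carrying $D_h(u)$ and $D_h(v)$ vanish, leaving $\alpha(u\wedge v)\cdot D_h(w) = 0$. Choosing $w$ with $D_h(w)\neq 0$ and invoking that $S(V)$ is a domain gives $\alpha(u\wedge v) = 0$ for all $u,v \in V^h$; this is case (b) when $\codim V^h = 1$.

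For $\codim V^h = 2$ I would take $u \in V^h$ and $v,w$ arbitrary, so the $D_h(u)$-term drops and (3) rearranges to $\alpha(u\wedge v)\cdot D_h(w) = \alpha(u\wedge w)\cdot D_h(v)$. Given $v$ with $D_h(v)\neq 0$, I pick $w$ with $D_h(w)$ independent from $D_h(v)$, possible exactly because $\dim V_h = 2$; the divisibility fact then forces $\alpha(u\wedge v) \in \FF\,D_h(v)$, i.e.\ case (c) (the subcase $D_h(v)=0$ being handled by the previous paragraph). For $\codim V^h > 2$ I would keep $u,v,w$ general and rewrite (3) as $\alpha(u\wedge v)\cdot D_h(w) = \alpha(w\wedge v)\cdot D_h(u) + \alpha(u\wedge w)\cdot D_h(v)$. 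Since $\dim V_h \geq 3$, I can choose $w$ with $D_h(w)$ outside the at-most-two-dimensional span of $D_h(u), D_h(v)$; passing to the domain $S(V)/(D_h(u),D_h(v))$ kills the right-hand side, and since the image of $D_h(w)$ is a nonzero linear form there, the degree-one element $\alpha(u\wedge v)$ lies in $\FF\,D_h(u) + \FF\,D_h(v)$, giving case (d).

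The hard part will be bookkeeping rather than depth: I must make the ``choose an image vector in general position'' step uniform while also treating the degenerate configurations in which $D_h(u), D_h(v)$ are proportional or zero, so that the ideal $(D_h(u), D_h(v))$ degenerates to a principal or zero ideal. In each such case the target span in (c)/(d) shrinks accordingly, and one checks that the integral-domain property of $S(V)$ still delivers the stated conclusion; verifying this uniformly across the degenerate cases is the only point demanding care.
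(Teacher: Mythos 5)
Your proposal is correct and takes essentially the same route as the paper's proof: both arguments choose the third entry $w$ of condition \cref{JacobiGrp}(3) so that $w-\,^hw$ is in general position inside $V_h$ (possible precisely because $\dim V_h=\codim V^h$ is large enough in each case) and then use that $S(V)$ is a domain, with linear forms prime, to pin down $\alpha(u\wedge v)$. Your quotient-ring step $S(V)/(u-\,^hu,\ v-\,^hv)$ for case (d) and your explicit treatment of degenerate configurations simply make precise the divisibility argument the paper leaves implicit, the paper instead organizing the cases via the decomposition $V=V^h\oplus (V^h)^\perp$ and bilinearity.
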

\begin{proof}
For brevity, write $\hat u = u- \,^hu$ for all $u \in V$
so that 
\cref{JacobiGrp}(3) implies
that
\begin{equation}
\label{Condition3}
0=\alpha(u \wedge  v)\hat w
+
\alpha(v\wedge  w)\hat u
+
\alpha(w\wedge  u)\hat v
\qquad
\text{ in $S(V)$ }
\text{for all $u,v,w \in V$}
\,.
\end{equation}
If $u,v \in V^h$
and $\codim V^h \geq 1$,
then
$\alpha(u\wedge v)=0$ 
as we may choose $w\notin V^h$.
If $u \in V^h$ 
and $v \in (V^h)^\perp$ 
and $\codim V^h\geq 2$,
then
$\alpha(u\wedge v)
\in \FF\hat{v}$ 
as
we may choose
$w$
with $\hat{v}$ and $\hat{w}$
independent.
Lastly,
if $u,v \in (V^h)^\perp$
with $\codim V^h>2$,
then
$\alpha(u\wedge v)
\in \FF\hat{u}+ \FF\hat{v}$ 
as we may choose $w$
with $\hat w\notin\FF\hat u+\FF \hat v$.
\end{proof}

\begin{lemma}[A partial converse to \cref{CodimConditions}]
\label{JacobiConverse}
For any $h$ in $G$,
suppose $\gamma=(\lambda \ot hg) \oplus (\alpha \ot h)$ is a cochain in $\CCC^2_{-1}(h)$ with either
\begin{enumerate}
    \item[(a)] $h$ is the identity element of $G$, or
    
    \item[(b)] $\codim V^h = 1$ and $\alpha(u\wedge v)=0$ for all $u,v$ in $V^h$, or
    
    \item[(c)]
    $\codim V^{h}=2$ and $\alpha(u\wedge v)=0$ for $u \in V^h$ and $v \in V$, or
    
    \item[(d)]
    $\codim V^h>2$ and $\alpha(u\wedge v) =0$ for all $u,v \in V$.
\end{enumerate}
Then $\gamma$ satisfies \cref{JacobiGrp}(3).
\end{lemma}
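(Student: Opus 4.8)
The plan is to recognize that \cref{JacobiGrp}(3) asserts exactly that the map $F\colon V\times V\times V\to S(V)$ given by
$$
F(u,v,w)\ :=\ \alpha(u\wedge v)\,\hat w+\alpha(v\wedge w)\,\hat u+\alpha(w\wedge u)\,\hat v
\qquad\text{where } \hat u:=u-\,^{h}u,
$$
vanishes identically (the products being taken in the commutative ring $S(V)$). First I would record two structural facts about $F$: it is linear in each of its three arguments, and it is totally alternating. Cyclic invariance $F(u,v,w)=F(v,w,u)$ is manifest, and a single transposition, using that $\alpha$ is alternating on $\Wedge^2 V$, gives $F(v,u,w)=-F(u,v,w)$; since a $3$-cycle and a transposition generate $S_3$, the map $F$ transforms by the sign character and is thus an alternating trilinear form. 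Consequently it suffices to verify $F(e_i,e_j,e_k)=0$ on the elements of a single fixed basis, and by antisymmetry we may assume the indices satisfy $i<j<k$.

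Next I would fix a basis $e_1,\dots,e_n$ of $V$ with $e_1,\dots,e_d$ a basis of $V^h$ (so $d=\dim V^h$) and the remaining $e_{d+1},\dots,e_n$ completing it; I will call an index $c$ a \emph{complement index} when $c>d$, i.e.\ when $e_c\notin V^h$. The elementary fact driving everything is that $\hat e_c=e_c-\,^{h}e_c=0$ precisely when $e_c\in V^h$; hence a ``hat'' factor $\hat e_c$ can be nonzero only at a complement index. Case~(a) is then immediate, since $h=1_G$ forces $\hat u=0$ for every $u$ and so $F\equiv 0$.

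For the remaining cases I would argue uniformly by counting complement indices in a single surviving term $\alpha(e_a\wedge e_b)\,\hat e_c$ of $F(e_i,e_j,e_k)$, where $\{a,b,c\}=\{i,j,k\}$ are distinct. A nonzero $\hat e_c$ already forces $c$ to be a complement index. In case~(d) the hypothesis $\alpha\equiv 0$ kills every term. In case~(b), the hypothesis $\alpha(u\wedge v)=0$ for $u,v\in V^h$ forces at least one of $a,b$ to be a complement index whenever $\alpha(e_a\wedge e_b)\neq 0$; together with $c$ this demands two distinct complement indices, which is impossible as $\codim V^h=1$. In case~(c), the stronger hypothesis $\alpha(u\wedge v)=0$ for $u\in V^h$, $v\in V$ forces \emph{both} $a$ and $b$ to be complement indices, so a surviving term would need three distinct complement indices, impossible as $\codim V^h=2$. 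In each case every term of $F(e_i,e_j,e_k)$ vanishes, so $F\equiv 0$ and \cref{JacobiGrp}(3) holds.

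The only genuinely delicate point is the reduction step: one must verify the total antisymmetry of $F$ carefully so that checking ordered basis triples is legitimate. Everything after that is the short pigeonhole count on complement indices, where the parts (b), (c), (d) differ only in how many of the two arguments of $\alpha$ the hypothesis forces into the complement of $V^h$.
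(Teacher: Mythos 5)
Your proof is correct and follows essentially the same route as the paper's: both arguments decompose $V$ relative to $V^h$, exploit multilinearity and the alternating character of the trilinear expression in \cref{JacobiGrp}(3), and finish with a dimension count on a complement of $V^h$. Your pigeonhole count on complement indices is precisely the paper's observation that $\Wedge^2(V^h)^\perp=\{0\}$ and $\Wedge^3(V^h)^\perp=\{0\}$ in codimensions $1$ and $2$, with the added (harmless, and indeed clarifying) step of verifying total antisymmetry once so that checking ordered basis triples is legitimate --- a reduction the paper uses only implicitly in its case analysis.
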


\begin{proof}
We again write $\hat v = v-\,^hv$ for all $v \in V$ and verify that
\begin{equation}
\label{eqn3}
0=\alpha(u\wedge  v)\hat w
+
\alpha(v\wedge  w)\hat u
+
\alpha(w\wedge  u)\hat v
\qquad
\text{for all $u,v,w \in V$}
\,.
\end{equation}
We assume $\codim V^h$ is $1$ or $2$ 
else the statement is trivial
and consider $V=V^h \oplus (V^h)^\perp$. 
Notice that
the right hand side of
\cref{eqn3} vanishes automatically
for $u,v,w\in (V^h)^\perp$
since it defines an alternating linear function and 
$\Wedge^3 (V^h)^\perp=\{0\}$.
If $\codim V^h=1$,
then the right hand side  also vanishes for
$u,v\in V^h$, $w\in V$
by (b)
and 
for
$u\in V^h$, $v,w \in (V^h)^\perp$ 
since it is alternating in $v,w$ for fixed $u$ and $\Wedge^2 (V^h)^\perp=\{0\}$. 
Lastly, if $\codim V^h=2$,
then the right side of
\cref{eqn3} vanishes
for
$u \in V^h$, $v,w \in V$
as (c) implies that
$\alpha(u\wedge v)=0=\alpha(u\wedge w)$.
\end{proof}


\section{Unique cohomology representatives}
\label{UniqueRepsSection}

In this section, we identify unique
representatives
for the cohomology classes
in the space $\HH^2_{-1}(A)$
of infinitesimal
graded deformations of $A=S(V)\rtimes G$
for a finite cyclic group
$G\subset \GL(V)$
generated by $g$
acting on $V \cong \FF^n$.
By \cref{CohDecomp},
$$
\HH^2_{-1}(A)
\ =\ 
\bigoplus_{h \in G}
\HH^2_{-1}(h)
\,
\qquad\text{ with }
\qquad
\HH^2_{-1}(h)
\ =\ \ZZZ^2_{-1}(h)/\BBB^2_{-1}(h)
\, ,
$$
and we describe  
coset representatives for
each 
$\HH^2_{-1}(h)$
using a choice of vector space
complement
$(V_h)^\perp$
to $V_h$
and complement $(V^h)^\perp$
to $V^h$ 
(see \cref{ChoiceOfComplement})
with projection map
$\pi_h: V\rightarrow V_h$.
Recall that 
the space of cochains for each $h$ in $G$
is
$\CCC_{-1}^2(h)=(V^* \ot \FF gh) 
\oplus (V \ot \Wedge^2 V^* \ot \FF h )$.

\begin{prop}
\label{UniqueReps}
Fix $h \in G$.
Each coset in $\HH^2_{-1}(h)$ 
has a unique representative
$\gamma = (\lambda \ot hg) \oplus (\alpha \ot h)$ in $\ZZZ^2_{-1}(h)$ 
for $\lambda\in V^*$ and
$\alpha\in \Hom_{\FF}(\Wedge^2 V, V)$
with
$\pi_h\alpha\equiv 0$ 
and

\begin{enumerate}
 \setlength{\itemindent}{-1.5ex}
\item[(a)]
when $\codim V^h =0$ (so $h = 1_G$),
$\lambda\equiv 0$ on 
$(V^G)^{\perp}$,
\item[(b)]
    when $\codim V^h=1$, $\alpha(u \wedge v) = 0$ for all $u,v \in V^h$ and 
    $\chi_h$ nontrivial implies $\lambda \equiv 0$ on $(V^h)^\perp$,
\item[(c)]
when $\codim V^h =2$,
$\alpha(u\wedge v)=0$ 
  for all $u \in V^h$, $v \in V$
  and
  $\lambda \equiv 0$ on $V^h$,
  and
    \item[(d)]
when    $\codim V^h >2$, 
$\gamma=0$.
\end{enumerate}
\end{prop}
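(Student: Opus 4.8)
The plan is to exhibit the claimed representatives as a \emph{complement} to the coboundaries inside the cocycles. Writing $N_h\subset\ZZZ^2_{-1}(h)$ for the set of cocycles $\gamma=(\lambda\ot hg)\oplus(\alpha\ot h)$ satisfying $\pi_h\alpha\equiv 0$ together with the case-specific conditions (a)--(d), each of these conditions is linear in $(\lambda,\alpha)$, so $N_h$ is a subspace and the difference of two normalized representatives of a single class is again a normalized coboundary. Thus it suffices to prove two things: \emph{existence}, that every cocycle is cohomologous to a member of $N_h$, and \emph{uniqueness}, that the only coboundary lying in $N_h$ is $0$. Together these give $\ZZZ^2_{-1}(h)=N_h\oplus\BBB^2_{-1}(h)$ and hence that the quotient map restricts to an isomorphism $N_h\xrightarrow{\sim}\HH^2_{-1}(h)$. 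Throughout I use \cref{coboundaries}: every coboundary comes from some $f\in V^*$ with $\lambda$-part $u\mapsto f(u-\,^gu)$ and $\alpha$-part $u\wedge v\mapsto f(v)(u-\,^hu)-f(u)(v-\,^hv)$; writing $\hat u=u-\,^hu$, the latter always lands in $V_h$, so $\pi_h$ acts as the identity on coboundary $\alpha$-parts.

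I would treat uniqueness first, as it is the more mechanical. Let $d(f\ot h)\in N_h$. Its $\alpha$-part lies in $V_h$, so $\pi_h\alpha\equiv 0$ forces that part to vanish, i.e.\ $f(v)\hat u=f(u)\hat v$ for all $u,v$. When $\codim V^h\ge 2$ (cases (c),(d)) I choose $u,v$ with $\hat u,\hat v$ independent to get $f(u)=f(v)=0$, and a short argument separately over $V^h$ and its complement gives $f\equiv 0$, so the coboundary is $0$. When $\codim V^h=1$ (case (b)), $V_h=\FF e$ and $\hat u=c(u)e$ with $\ker c=V^h$, so the relation forces $f=\kappa c$ for a scalar $\kappa$; the key computation is that $G$-equivariance of the isomorphism $V/V^h\xrightarrow{1-h}V_h$ gives $\,^ge=\chi_h(g)e$, whence $c(u-\,^gu)=(1-\chi_h(g))c(u)$ and the $\lambda$-part equals $\kappa(1-\chi_h(g))c$. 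If $\chi_h$ is trivial this is already $0$; if $\chi_h$ is nontrivial the imposed condition $\lambda\equiv 0$ on $(V^h)^\perp$ forces $\kappa=0$. The case $h=1_G$ (a) is similar: the $\alpha$-part is automatically $0$ and the $\lambda$-part lies in the annihilator of $V^G$, so vanishing on $(V^G)^\perp$ forces it to vanish.

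Existence is where the real work lies. For $h=1_G$ (case (a)) there is nothing to do to $\alpha$ (its coboundary part vanishes and $\pi_{1}=0$), and since the coboundary $\lambda$-parts fill the annihilator of $V^G$, I subtract one agreeing with $\lambda$ on $(V^G)^\perp$ to force $\lambda\equiv 0$ there. For $\codim V^h\ge 1$, I first arrange $\pi_h\alpha\equiv 0$ by subtracting a coboundary, the point being that \cref{CodimConditions} (cocycle condition \cref{JacobiGrp}(3)) pins down $\pi_h\alpha$ finely enough to be realizable as a coboundary $\alpha$-part: when $\codim V^h=1$, condition (3) forces the scalar form $\beta$ with $\pi_h\alpha=\beta\,e$ to vanish on $\Wedge^2 V^h$, making $\beta$ decomposable as $c\wedge f_0$, so the coboundary from $f_0$ removes $\pi_h\alpha$; the higher-codimension cases run analogously. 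Once $\pi_h\alpha\equiv 0$, I extract the remaining conditions from \cref{JacobiGrp}(2): feeding in $u\in V^h$ and using that $V^h$ is $G$-stable makes its left side vanish, forcing $\lambda(u)\hat v=0$ and hence $\lambda\equiv 0$ on $V^h$ in codimension $2$ (with $\alpha$ then vanishing on $V^h\wedge V$, matching (c)). The leftover coboundary freedom---adding multiples of $c$ to $f_0$, which shifts $\lambda$ by $\kappa(1-\chi_h(g))c$---is spent, when $\chi_h$ is nontrivial, to force $\lambda\equiv 0$ on $(V^h)^\perp$ in codimension $1$; in codimension $>2$ the same analysis collapses the entire cocycle into a coboundary, giving $\HH^2_{-1}(h)=0$ as in (d).

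The main obstacle, I expect, is this existence step, for two reasons. First, realizing $\pi_h\alpha$ as a coboundary $\alpha$-part is a decomposability statement that genuinely rests on the codimension bookkeeping of \cref{CodimConditions}; one must check that condition (3) leaves no indecomposable $V_h$-valued part of $\alpha$. Second, condition \cref{JacobiGrp}(2) couples $\lambda$ and $\alpha$, so normalizing $\alpha$ and normalizing $\lambda$ cannot proceed independently---the $\lambda$-conditions in (a)--(c) must be \emph{deduced} from (2) after $\alpha$ has been cleared, rather than imposed freely. Underlying all of this is the modular subtlety that $V^h$ and $V_h$ need not admit $G$-invariant complements, so the projection $\pi_h$ and the complements $(V^h)^\perp$ are arbitrary choices; I will need to confirm that the normalization is well posed with respect to these choices and stays compatible with the coupling in (2).
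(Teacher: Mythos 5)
Your proposal is correct and takes essentially the same route as the paper's proof: both normalize a given cocycle by subtracting explicit coboundaries from \cref{coboundaries}, using \cref{CodimConditions} (i.e.\ \cref{JacobiGrp}(3)) to show $\pi_h\alpha$ has coboundary form, deriving the vanishing of $\lambda$ on $V^h$ in codimension $\ge 2$ from \cref{JacobiGrp}(2), and spending the residual coboundary freedom on $\lambda$ in codimensions $0$ and $1$, with uniqueness proved in both by showing any coboundary satisfying the normalization is zero. Your two-step codimension-one argument (decompose $\beta = c\wedge f_0$, then adjust by $f=\kappa c$ using $\,^ge=\chi_h(g)e$) is only a repackaging of the paper's one-step construction of $f$ with its $(1-\chi_h(g))^{-1}$ formula.
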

\begin{proof}
Fix $\gamma=(\lambda \ot hg) \oplus (\alpha \ot h)$
in $\ZZZ^2_{-1}(h)$.
We show $\gamma$ is in the same
coset as a cocycle 
$\gamma-df$
satisfying
the given conditions
and then show this cocycle is unique.
We construct the 
linear function
$f:V\rightarrow \FF$ explicitly
and write
$\gamma-df=\gamma'=
(\lambda'\ot hg)\oplus (\alpha'\ot h)$.
Recall that we identify
$\lambda$ with a function
$\lambda: V\rightarrow \FF$
and $\alpha$ with a function $\alpha:\Wedge^2 V\rightarrow V$.
Set $\hat{v}=v-\, ^hv$
for any $v$ in $V$.

\vspace{1ex}

\noindent
{\bf Existence of representatives.}
Assume $\codim V^h=0$, i.e., $h=1_G$. 
For $v\in (V^G)^\perp$,
define $f(v-\,^gv)=\lambda(v)$
and extend to a linear function
$f: V\rightarrow \FF$.
Note that $f$ is well-defined:
If $v-\,^gv = w-\,^gw$
then $v-w\in V^G$, so $v=w$
for $v,w$ in $(V^G)^\perp$.
Then $df\in \BBB^2_{-1}(1_G)$
and $\gamma'=\gamma-df$ satisfies the conditions
in the statement by \cref{coboundaries} as
$\lambda'(v)=\lambda(v)-f(v-\,^gv)=0$
for $v\in (V^G)^\perp$.

Now assume
$\codim V^h=m \geq 2$
with $v_1, \ldots, v_m$
a basis of $(V^h)^\perp$
and observe that $\hat{v}_1,\ldots, \hat{v}_m$
form a basis of $V_h$.
Define a map
$f:(V^h)^\perp\rightarrow \FF$
by setting $f(v_i)$ and $f(v_j)$
to be the unique constants such that
$$
\pi_h\alpha(v_i \wedge v_j)=f(v_j)\hat{v}_i
- f(v_i) \hat{v}_j
\, 
\quad\text{ for $1\leq i\neq j \leq m$}
$$
using \cref{JacobiGrp}
and \cref{CodimConditions}(d)
when $\codim V^h>2$
and using
the fact that
$V_h=\FF\text{-span}\{\hat{v}_1,\hat{v}_2\}$
when $\codim V^h=2$.
Notice that $f$ is well-defined:
If $\pi_h\alpha(v_i \wedge v_j)=a\hat{v}_i+b\hat{v}_j$
and 
$\pi_h\alpha(v_i \wedge v_k)=c\hat{v}_i+d\hat{v}_k$
for $a,b,c,d$ in $\FF$
and $i,j,k$ distinct,
then
$
(a-c)\hat{v}_i + b\hat{v}_j-d\hat{v}_k
=
\pi_h\alpha(v_i \wedge (v_j-v_k))
$
is a linear combination
of
$\hat{v}_i$ and $\hat{v}_j-\hat{v}_k$
by \cref{CodimConditions}(d)
so $b=d$.
By \cref{CodimConditions},
we may extend $f$ to a map $f:V\rightarrow \FF$
satisfying
$$
\alpha(v\wedge u)=f(u)\hat v
\quad\text{ for }u\in V^h,\ v\in V
\, .
$$
Again, this is well-defined
as $\alpha((v-w)\wedge u)$
lies in the span of $\hat{v}-\hat{w}$
for all $u\in V^h$, $v,w$ in $V$.

We claim that $\gamma-df=\gamma'=(\lambda'\ot hg) \oplus (\alpha'\ot h)$
satisfies the conditions in the statement.
By \cref{coboundaries},
\begin{equation}
\label{Eqn0}
\alpha'(v\wedge u)
=\alpha(v\wedge u)-
f(u)\hat{v}+f(v)\hat{u}
\quad\text{ for }
u,v\in V
\, .
\end{equation}
Thus $\alpha'(v\wedge u)$
vanishes for
$u,v\in V^h$ as
$\alpha(v\wedge u)=0$
by \cref{JacobiGrp,CodimConditions}
and also vanishes for $u\in V^h$, $v\in (V^h)^\perp$
as
 $\alpha(v\wedge u)=f(u)\hat{v}$
 by construction of $f$.
For $u,v\in (V^h)^\perp$, 
$$
\pi_h\alpha'(v\wedge u)
=
\pi_h\alpha(v\wedge u)-
f(u)\hat{v}+f(v)\hat{u}
=
\pi_h\alpha(v\wedge u)-\pi_h\alpha(v\wedge u)
=0 \, 
$$
as all functions involved are linear:
For
$v=\sum_i a_i v_i$ and $u=\sum_j b_j v_j$
with $a_i, b_j\in \FF$ 
$$
\alpha(v\wedge u)
=
\sum_{i,j} a_i b_j(f(v_j) \hat{v}_i - f(v_i)\hat{v}_j)
=f(u)\hat{v}-f(v)\hat{u
}
\, .
$$
Hence $\pi_h\alpha'\equiv 0$.
Note in particular that this implies
that
$\alpha'\equiv 0$ when $\codim V^h>2$
by \cref{CodimConditions}(d).

Since $\gamma-df=\gamma'$ is a cocycle,
\cref{JacobiGrp}(2) 
implies that 
\begin{equation}
\label{star00}
(\alpha'-\, ^{g^{-1}}\alpha')
(u\wedge  v)
=
\lambda'(v)\hat{u}
-
\lambda'(u)\hat{v}
\quad\text{ for }
u,v \in V
\, .
\end{equation}
Observe in particular that
this implies
$\lambda'\equiv 0$ when $\codim V^h>2$:
In that case, $\alpha'\equiv 0$ so
$$
0 =
\lambda'(v)\hat{u}
-\lambda'(u)\hat{v}
\quad
\text{for all $u,v \in V$}
\, ;
$$
to see that $\lambda'(u)=0$,
choose $v \notin V^{h}$ when
 $u \in V^{h}$
 and choose $v \notin V^{h}$ 
with $\hat u$ and $\hat v$  linearly independent when $u \notin V^{h}$. 
Thus $\gamma-df=\gamma'\equiv 0$ when $\codim V^h>2$.

Now we argue that $\lambda'\equiv 0$ on $V^h$
when $\codim V^h=2$.
Fix $u\in V^h$.
We saw above
that \cref{Eqn0} implies that 
 $\alpha'(u\wedge v)=0$
 for all $v\in V$
and so $\alpha'(\,^gu\wedge \,^gv)=0$ 
for all $v$ as well
since $G$ preserves $V^h$ 
set-wise.
Thus 
by \cref{star00} above,
$\lambda'(u) \hat v
=0$ for all $v$
and $\lambda'(u)=0$.
Thus $\lambda'\equiv 0$ on $V^h$.

Lastly, suppose $\codim V^h=1$.
Fix nonzero $x\in (V^h)^\perp$
so $\hat{x}$ spans $V_h\supset \im\pi_h\alpha$ 
and
set $w=x-\, ^gx$.
Let $f: V\rightarrow \FF$
be the linear function with
$$
\begin{aligned}
f(u)\, \hat{x}  
&= \pi_h \alpha(x\wedge u)
\vphantom{\big( a \big)}
&&\quad\text{ for }
u\in V^h,\\
f(x) \, \hat{x}  
&= \big(1-\chi_h(g)\big)^{-1}
\big(\lambda(x)\, \hat{x}  -\pi_h \alpha(x\wedge w)\big)
&&\quad\text{ for }
\chi_h\not\equiv 1,\\
f(x) \ \ 
&= 0 
\rule{0ex}{2.5ex}
&&\quad\text{ for }
\chi_h\equiv 1
\, .
\end{aligned}
$$
We verify that $\gamma-df
=\gamma'=(\lambda'\ot hg)\oplus (\alpha'\ot h)$
satisfies the conditions in the statement.
By \cref{coboundaries},
$$
\alpha'(u\wedge v)=\alpha(u\wedge v)-f(v)\hat{u}+f(u)\hat{v}
\quad\text{ for }
u,v\in V
\, .
$$
Thus $\alpha'(u\wedge v)$ is zero for $u,v\in V^h$
as $\alpha(u\wedge v)=0$ 
by \cref{JacobiGrp,CodimConditions}.
It also vanishes for $u,v\in (V^h)^\perp$
as it defines 
an
alternating function in $u$ and $v$
and $\Wedge^2(V^h)^\perp=\{0\}$
as $\codim V^h=1$.
And for $u\in V^h$, 
$$\alpha'(x\wedge u)
=\alpha(x\wedge u)-f(u)\hat{x}
=\alpha(x\wedge u)
- \pi_h\alpha(x\wedge u)
\, .
$$
Hence $\pi_h\alpha'\equiv 0$.

Now assume $\chi_h$ is nontrivial
so $\chi_h(g)\neq 1$.
We argue that  $\lambda'\equiv 0$
on $(V^h)^\perp$.
First note that $\, ^gx=u+\chi_h(g)x$
for some $u\in V^h$
so that
$w=x-\, ^gx
=u+(1-\chi_h(g))\, x$.
Then as
$\alpha(x\wedge u)
=\alpha(x\wedge w)$,
$$
f(w)\, \hat{x}
=f(u)\, \hat{x}
+\big(1-\chi_h(g)\big)\, f(x)
\, \hat{x}
=\pi_h\alpha(x\wedge u)
+ \lambda(x) \, \hat{x}
-\pi_h \alpha(x\wedge w)
=
\lambda(x)\, \hat x
\, 
$$
and
\cref{coboundaries} implies that
$$
\begin{aligned}
\lambda'(x)\, \hat{x}
=
\big(\lambda(x)-f(x-\, ^gx)\big)\, \hat{x}
=
\lambda(x)\, \hat{x}
-f(w)\, \hat{x}
=0
\, .
\end{aligned}
$$

\noindent
{\bf Uniqueness
of representatives.}
We argue these coset representatives are unique.
Suppose $\gamma=(\lambda \ot hg) \oplus (\alpha \ot h)$ and $\gamma'=(\lambda' \ot hg) \oplus (\alpha' \ot h)$ lie in the same coset of $\HH^2_{-1}(h)$ and
both
satisfy the conditions in the statement. 
Then $\gamma-\gamma' = df$ for some $f: V \to \FF$ and
\cref{coboundaries}
implies that
\begin{equation*}
(\alpha-\alpha')
(u \wedge  v) 
=
f(v)\hat{u}
-
f(u)\hat{v}
\ \ 
\text{ and }
\ \ 
(\lambda-\lambda')
(u)
=f(u-\,^gu)
\quad\text{ for
all $u,v\in V$.}
\end{equation*}
Then
$\im (\alpha-\alpha')\subset V_h$ but $\pi_h (\alpha - \alpha')\equiv 0$ by
assumption,
so $\alpha\equiv\alpha'$
and
\begin{equation}
\label{star1}
0=f(v)\hat{u}
-
f(u)\hat{v}
\quad\text{ for
all $u,v\in V$.}
\end{equation}
To show that $\lambda\equiv\lambda'$,
we argue that $f(u-\, ^g u)=0$
for all $u$ by considering the codimension of $V^h$.

Assume $\codim V^h \ge 2$
and
consider some nonzero
$w=u-\, ^g u$ in $V$.
By \cref{star1},
$$
0=f(v)\hat{w}
- f(w)\hat{v}
\quad\text{ for all }
v\in V
\, .
$$
To see that $f(w)=0$,
choose any $v\notin V^h$
when 
 $w \in V^h$
 and
 choose $v$
with $\hat{v}$ and $\hat{w}$ independent in $V_h$
when $w\notin V^h$.
Thus $(\lambda-\lambda')(u)
=f(w)=0$ and
$\lambda\equiv\lambda'$.

Now assume that $\codim V^h=1$ and 
let $x$ span $(V^h)^\perp$.
First notice that $f$ is zero on $V^h$ 
since \cref{star1} implies that
$
0=
f(u)\hat{x}
$
 for all $u\in V^h$.
Then
$f(u-\, ^g u)=0$ for all $u$ in $V^h$ as $G$ fixes $V^h$ set-wise.
We show $f(x-\, ^g x)=0$ as well.
As above,
$x-\, ^g x=(1-\chi_h(g))\, x$
modulo $V^h$
and thus
$f(x-\, ^g x) = 
(1-\chi_h(g)) f(x)$.
This is zero when 
$\chi_h$ is the trivial character of course, and 
when $\chi_h$ is nontrivial,
then
$\lambda$ and $\lambda'$ both vanish
on $(V^h)^\perp$ by condition (b)
 so already
$0=(\lambda-\lambda')(x)=f(x-\, ^g x)$.
Hence 
$\lambda \equiv \lambda'$.

Finally, assume $h=1_G$. 
By assumption, $\lambda$ and $\lambda'$ are zero on $(V^G)^\perp$ and $\lambda \equiv \lambda'$ on $V^G$ since $(\lambda-\lambda')(v)=f(v-\, ^gv)$
for all $v$.
Hence $\lambda \equiv \lambda'$.
\end{proof}

\section{The main result: Graded deformation cohomology}
\label{MainTheoremSection}
We are now ready 
to describe 
 the space of infinitesimal
graded deformations
for a finite cyclic group $G\subset \GL(V)$ with $V\cong \FF^n$ acting on a polynomial ring $S(V)$
over an arbitrary field $\FF$
with $\cchar \FF \neq 2$.
We describe the Hochschild cohomology
$\HH_{-1}^2(A)$
for $A=S(V)\rtimes G$
in terms of 
the subspaces $V^h$ and $V_h=\im(1-h)$ of $V$ 
for $h$ in $G$
which are stabilized set-wise by $G$
(see \cref{LinearCharacter})
and 
the linear character
$\chi_h:G\rightarrow \FF^{\times}$ 
defined by
$\chi_h(g):=
\det
\left[
\begin{matrix}
g
\end{matrix}
\right]_{V/V^h},
$
for each $h$ in $G$.
In addition, we take the trivial action of
$G$ on $\FF$ so
$\FF^{\chi_h}=\{0\}$
unless $\chi_h\equiv 1$,
the trivial character.
We describe the 
cohomology giving all
graded deformations
of first order
in terms of these linear characters, compare
with \cref{NonModularCohDeg2},
and establish the theorem
in the introduction.
Again we use the transfer map on $V$
given by
$T:V\rightarrow V$, $v\mapsto \sum_{h\in G}\, ^hv$,
which is zero when $G$ contains
a nondiagonalizable reflection
by \cref{ReflectionLemma}.
\begin{thm}
\label{MainThm}
Let $G\subset \GL(V)$ be a finite cyclic group 
acting on $V \cong \FF^n$.
The space of 
infinitesimal
graded deformations of $A=S(V)\rtimes G$
is isomorphic as an $\FF$-vector space to
$$
\begin{aligned}
\HH_{-1}^2(A)
\ \cong\
(V^G/\im T)^*
\oplus
\big( 
V \ot\, \Wedge^2 V^*
\big)^G
\   \oplus\hspace{-2ex}
\bigoplus_{\substack{h \in G \\ \codim V^h =1 }}
\hspace{-2ex}
\left(\FF \oplus
\big(V/V_h \ot (V^h)^*\big)
\right)^{\chi_h}
\ \oplus\hspace{-2ex}
\bigoplus_{\substack{h\in G \\ \codim V^h =2 }}
\hspace{-2ex}
\left(
V/V_h
\right)^{\chi_h}
.
\end{aligned}
$$
\end{thm}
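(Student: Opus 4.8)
The plan is to combine the two structural results already established: the decomposition $\HH^2_{-1}(A)\cong\bigoplus_{h\in G}\HH^2_{-1}(h)$ of \cref{CohDecomp} with the explicit unique coset representatives of \cref{UniqueReps}. For each $h\in G$ I would produce an explicit $\FF$-linear isomorphism between the space of normalized cocycles representing $\HH^2_{-1}(h)$ and the summand of the theorem indexed by $h$, organizing the argument according to $\codim V^h\in\{0,1,2,{>}2\}$. Throughout, since $G$ is cyclic, $V^h$ and $V_h$ are $G$-stable, so $G$ acts on the quotients $V/V^h$ and $V/V_h$ and the character $\chi_h$ of \cref{LinearCharacter} is defined; the recurring mechanism is that cocycle condition \cref{JacobiGrp}(2), after projecting the values of $\alpha$ into $V/V_h$, forces the relevant part of $\alpha$ to be $\chi_h$-semi-invariant.

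I would dispatch the two extreme cases first. When $\codim V^h>2$, \cref{UniqueReps}(d) gives $\gamma=0$, so $\HH^2_{-1}(h)=0$ and $h$ contributes nothing. When $h=1_G$ (so $\codim V^h=0$) one has $\hat u=u-\,^hu=0$ for all $u$, so \cref{JacobiGrp}(2) collapses to $\alpha=\,^{g^{-1}}\alpha$, i.e.\ $\alpha\in(V\ot\Wedge^2V^*)^G$, while \cref{JacobiGrp}(1) together with the normalization $\lambda\equiv0$ on $(V^G)^\perp$ of \cref{UniqueReps}(a) identifies $\lambda$ with an element of $(V^G/\im T)^*$ (recall $\im T\subseteq V^G$). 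These decouple, yielding the summand $(V^G/\im T)^*\oplus(V\ot\Wedge^2V^*)^G$.

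For a \emph{bireflection} ($\codim V^h=2$), the normalization of \cref{UniqueReps}(c) forces $\alpha$ to vanish on $V^h\wedge V$ and to land in $(V_h)^\perp$, so $\alpha$ is determined by its value $W=\alpha(v_1\wedge v_2)$ on the $1$-dimensional space $\Wedge^2(V^h)^\perp$. Since $G$ acts on $\Wedge^2(V/V^h)$ by $\chi_h$, a short computation with \cref{JacobiGrp}(2) for $u,v\in(V^h)^\perp$ shows that, modulo $V_h$, the class $\overline{W}\in V/V_h$ satisfies $\,^g\overline{W}=\chi_h(g)\overline{W}$, so $\overline{W}\in(V/V_h)^{\chi_h}$; the remaining $V_h$-component of that same equation then determines $\lambda$ on $(V^h)^\perp$, while $\lambda\equiv0$ on $V^h$ by normalization, so $\lambda$ carries no further freedom. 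Conversely any $\overline{W}\in(V/V_h)^{\chi_h}$ lifts to such a cocycle, and \cref{JacobiConverse} guarantees condition \cref{JacobiGrp}(3); this gives $\HH^2_{-1}(h)\cong(V/V_h)^{\chi_h}$.

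The reflection case ($\codim V^h=1$) is the crux and the main obstacle, since here both $\lambda$ and $\alpha$ contribute and are entangled by \cref{JacobiGrp}(2). Fixing $x$ spanning $(V^h)^\perp$, the normalized $\alpha$ is determined by the map $V^h\to(V_h)^\perp\cong V/V_h$, $u\mapsto\alpha(x\wedge u)$; projecting \cref{JacobiGrp}(2) for $u=x$, $v\in V^h$ into $V/V_h$ and using $\,^gx=\chi_h(g)x$ modulo $V^h$ shows this map is $\chi_h$-semi-invariant, hence an element of $(V/V_h\ot(V^h)^*)^{\chi_h}$; the $V_h$-component of the same relation pins down $\lambda|_{V^h}$ in terms of $\alpha$. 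The one remaining scalar is $\lambda(x)$, which by the normalization \cref{UniqueReps}(b) survives precisely when $\chi_h$ is trivial, matching the $G$-trivial summand $\FF^{\chi_h}$. Assembling these and checking compatibility with \cref{JacobiGrp}(1) (vacuous when $\im T=0$, e.g.\ for nondiagonalizable reflections by \cref{ReflectionLemma}) gives $\HH^2_{-1}(h)\cong\big(\FF\oplus(V/V_h\ot(V^h)^*)\big)^{\chi_h}$. Summing over $h$ and grouping by codimension then recovers the stated isomorphism; the delicate points to verify with care are the well-definedness and surjectivity of the reflection identification and the consistent bookkeeping of the entangled $\lambda$--$\alpha$ constraints.
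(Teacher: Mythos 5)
Your plan reproduces the paper's proof almost verbatim in structure: decompose via \cref{CohDecomp}, kill $\codim V^h>2$ via \cref{UniqueReps}(d), and for each remaining codimension build the isomorphism on the distinguished representatives, with exactly the paper's mechanisms — \cref{JacobiGrp}(2) projected into $V/V_h$ producing $\chi_h$-semi-invariance, the $V_h$-component of the same identity pinning down $\lambda$, and the scalar $\lambda(x)$ surviving precisely when $\chi_h$ is trivial (so that it matches $\FF^{\chi_h}$). Your treatments of the identity contribution and of the codimension-two contribution are sound and agree with the paper's maps $\Phi$ and $\Phi'$.

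The one genuine gap is in the reflection case, at exactly the point you wave off parenthetically: verifying \cref{JacobiGrp}(1) for the cochain reconstructed from a $\chi_h$-invariant pair $\lambda'\oplus\alpha'$. You call this check ``vacuous when $\im T=0$, e.g.\ for nondiagonalizable reflections by \cref{ReflectionLemma},'' but that lemma only forces $\im T=0$ when $G$ contains a \emph{nondiagonalizable} reflection, and a cyclic group can contain a diagonalizable reflection while $\im T\neq 0$: in the paper's own example of $G\subset\GL_4(\FF_3)$ generated by a unipotent $3\times3$ Jordan block together with the eigenvalue $-1$, one has $\im T\neq 0$ while $g^3=\mathrm{diag}(1,1,1,-1)$ is a diagonalizable reflection. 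For such $h$, your construction determines $\lambda$ on $V^h$ from the $V_h$-component of \cref{JacobiGrp}(2), and nothing in your sketch shows this $\lambda$ annihilates $\im T\subset V^G\subset V^h$; if it did not, the reconstructed cochain would fail to be a cocycle and the inverse map (hence surjectivity, hence the dimension count) would break. The paper closes this by using that a diagonalizable reflection satisfies $V^h\cap V_h=\{0\}$, so one may choose $(V_h)^\perp=V^h$ in \cref{ChoiceOfComplement}; then $\pi_h\alpha\equiv 0$ together with $G$-stability of $V^h$ forces $(\alpha-\,^{g^{-1}}\alpha)(x\wedge u)$ to lie in $V^h\cap V_h=\{0\}$ for $u\in V^h$, whence the constructed $\lambda$ vanishes on all of $V^h$ and \cref{JacobiGrp}(1) holds. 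You need this argument (or an equivalent), with \cref{ReflectionLemma} covering only the complementary case $|h|=\cchar\FF$.
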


\begin{proof}
We choose a generator $g$ of $G$
and construct the periodic-twisted-Koszul resolution 
$X_{\DOT}$ 
(see \cref{PeriodicResolutionSection})
of $A=S(V)\rtimes G$
to express $\HH^{\DOT}(A)$.
We then use \cref{CohDecomp}
to
decompose $\HH^2_{-1}(A)$ according to the contribution of each group element:
$$\HH^2_{-1}(A)
\ \cong \
\bigoplus_{h\in G}\HH^2_{-1}(h)
\, .
\, 
\vspace{-1ex}
$$
By \cref{UniqueReps}, $\HH^2_{-1}(h)=0$ when $\codim V^h >2$ and we analyze the remaining cases
to show
$$
\begin{aligned}
\HH^2_{-1}(h) 
\ \ \cong\ \ 
\begin{cases} 
\ \ (V^G/\im T)^*
\oplus
( 
V \ot\, \Wedge^2 V^*)^G
&
\text{when $\codim V^h = 0$,}
\\
\ \ \left(\FF \oplus
\big(V/V_h \ot (V^h)^*\big)
\right)^{\chi_h}
&\text{when $\codim V^h = 1$,
}
\\
\ \ 
\left(
V/V_h 
\right)^{\chi_h}
&\text{when  $\codim V^h = 2$}
\, .
\end{cases}
\end{aligned}
$$
In each case,
we establish the isomorphism
by defining a map 
$\Phi$ from
a set of 
distinguished (cocycle) coset representatives $$
\gamma = (\lambda \ot hg) \oplus (\alpha \ot h)
\quad\text{ in }
\ZZZ^2_{-1}(h)
\qquad\text{ for }
\lambda\in V^*
\text{ and }
\alpha\in \Hom_{\FF}(\Wedge^2 V, V)
\, 
$$
of $\HH_{-1}^2(h)$
given in \cref{UniqueReps}
to the indicated vector space
and then construct a map $\Phi'$ 
the opposite direction with 
$\Phi \Phi'=\Phi' \Phi=
1$, the identity map.
Note that
this choice of representatives
depends on a choice
of
vector space complement $(V^h)^\perp$
to $V^h$ 
and complement $(V_h)^\perp$ to $V_h$
for each $h$ in $G$, see
\cref{ChoiceOfComplement}.

\vspace{1.5ex}

\noindent
{\bf Contribution of the identity:}
For $h=1_G$, define
$$
\begin{aligned}
\Phi:
\HH^2_{-1}(1_G)
\longrightarrow
(V^G/\im T)^* \oplus (V \ot \Wedge^2 V^*)^G
\quad
\text{by}
\quad
(\lambda\ot g) \oplus (\alpha\ot 1_G) + \BBB^2_{-1}(1_G)
\mapsto
\lambda'
\oplus \alpha
\end{aligned}
$$
for 
$\lambda':
V^G/\im T\rightarrow \FF$
the extension of
$\lambda|_{V^G}$ 
to $V^G/\im T$ where $(\lambda\ot g) \oplus (\alpha\ot 1_G)$ is a 
distinguished
coset representative of $\HH^2_{-1}(1_G)$
as in \cref{UniqueReps}.
Note here that
$\lambda|_{\im(T)}\equiv 0$
by \cref{JacobiGrp}(1) so
$\lambda'$ is well-defined.
Also observe that
$\Phi$ has the indicated codomain
because
$\alpha$ lies in $(V \ot \Wedge^2 V^*)^G$
by
\cref{JacobiGrp}(2):
$$
0 = \,^g\left(\alpha(u\wedge v) \right)
-
\alpha(\,^gu\wedge \,^gv)
\quad
\text{for all $u, v \in V$}
\, .
$$
Now we construct an inverse map. Define
$$
\Phi': 
\HH^2_{-1}(1_G) \longleftarrow
(V^G/\im T)^* \oplus (V \ot \Wedge^2 V^*)^G
\quad
\text{by}
\quad
\big((\lambda \ot g) \oplus (\alpha  \ot 1_G)\big) + \BBB^2_{-1}(1_G)
\leftmapsto
\lambda' \oplus \alpha
\,,
$$
 where $\lambda:V\rightarrow \FF$
is defined by
$\lambda(u)=\lambda'(u+\im T)$
for $u$ in $V^G$
and $\lambda\equiv 0$ on $(V^G)^\perp$.
We verify that $(\lambda \ot g) \oplus (\alpha  \ot 1_G)$ lies
in $\ZZZ^2_{-1}(1_G)$
and thus $\Phi'$ is well-defined
by checking
the three cocycle conditions in \cref{JacobiGrp}:
Condition $(1)$ holds 
by construction of $\lambda$
and Conditions (2) and (3)
hold since $h=1_G$ and
$\alpha$ is invariant.
In fact, we observe
that $\Phi'(\lambda' \oplus \alpha)$
is a distinguished
coset representative as in
\cref{UniqueReps}
and one may verify that $\Phi \Phi'=\Phi' \Phi=1$
using \cref{UniqueReps}.

\vspace{1.5ex}

\noindent
{\bf Codimension one contributions:}
Fix $h$ in $G$ with $\codim V^h=1$ and 
consider $V = V^h \oplus (V^h)^\perp$ with fixed element $x$ spanning $(V^h)^\perp$.
Define
$$
\Phi: \HH^2_{-1}(h) \longrightarrow \left(\FF \oplus 
(V/V_h \ot (V^h)^*) \right)^{\chi_h}
\quad
\text{by}
\quad
(\lambda \ot hg)
\oplus (\alpha\ot h)
+  \BBB^2_{-1}(h)
\mapsto
\lambda(x) \oplus
\alpha'
\,,
$$
for $\alpha'$ in
$V/V_h \ot (V^h)^*$
the map $V^h\rightarrow V/V_h$ defined by
$$\alpha'(u)=\alpha(x\wedge u)
+ V_h
\qquad
\text{for $u\in V^h$}
\, 
$$
where $(\lambda \ot hg) \oplus (\alpha \ot h)$ is a
distinguished coset representative of $\HH^2_{-1}(h)$ as  in \cref{UniqueReps}.

We verify that each $\lambda(x)\oplus \alpha'$ is $\chi_h$-invariant and hence $\Phi$ has the indicated codomain.
First notice that $\lambda(x)\in \FF$
is $\chi_h$-invariant since
$G$ acts trivially
on $\FF$
and 
$\lambda(x)=0$
when $\chi_h$ is not 
the trivial character.
Now we argue $\alpha'$ is $\chi_h$-invariant. 
Since $\alpha\equiv 0$ 
on $\Wedge^2 V^h$
and $\, ^gx= \chi_h(g)\, x$ modulo
$V^h$ (as $\dim_\FF (V/V^h)=1$),
\begin{equation}
\label{Eqn1a}
\chi_h(g)\, \alpha(x\wedge u) = \alpha\big(\chi_h(g) x\wedge u\big)  
= \alpha(\,^gx\wedge u)
\quad
\text{  for all $u \in V^h$}
\, ,
\end{equation}
and, as $G$ preserves $V_h$
and $V^h$ set-wise,
\begin{equation}
\label{Eqn1b}
\begin{aligned}
\big(\alpha'-\chi_h(g) \ ^{ g^{-1}}\alpha'\big)(u)
&=
\alpha'(u)
-
\chi_h(g) \ \,^{g^{-1}}\big(\alpha'(\,^gu)\big)
=
(\alpha(x\wedge u)+V_h)
- \chi_h(g) \ ^{g^{-1}}\big(\alpha(x\wedge \,^gu) + V_h\big)
\\
&=
\big(
\alpha(x\wedge u)-\,^{g^{-1}}(\alpha(\,^gx\wedge \,^gu))
\big)+ V_h
=(\alpha-\,^{g^{-1}}\alpha)
(x\wedge u) + V_h 
\quad\text{ for }
u\in V^h
\, .
\end{aligned}
\end{equation}
But
 \cref{JacobiGrp}(2) implies that $(\alpha-\, ^{g^{-1}}\alpha)(u\wedge v)$
lies in $V_h$
for all $u,v$ in $V$
so this last expression
is zero and
thus
$\alpha'$ is also $\chi_h$-invariant.

Now we construct an inverse map to $\Phi$,
$$
\Phi': \HH^2_{-1}(h) \longleftarrow \left(\FF \oplus 
(V/V_h \ot (V^h)^*) \right)^{\chi_h},
\quad
\text{}
\quad
\big(
(\lambda \ot hg) \oplus (\alpha \ot h)\big) + \BBB^2_{-1}(h) \leftmapsto
\lambda' \oplus \alpha'
\, .
$$
For a pair $\lambda'$ in $\FF$
and $\alpha':V^h\rightarrow V/V_h$,
define $\alpha:\Wedge^2 V \rightarrow V$
and $\lambda: V\rightarrow \FF$ as follows.
Let $\alpha(x\wedge u)$ 
be the unique
coset representative in $(V_h)^\perp$
of $\alpha'(u)$ for $u\in V^h$
and
extend to a linear function on $\Wedge^2 V$ 
by setting
$\alpha\equiv 0$ on $\Wedge^2 V^h$.
Then for $u \in V^h$,
$\pi_h \alpha(x\wedge u) = 0$
(for
projection map $\pi_h:V\rightarrow V_h$)
and $\alpha'(u)=\alpha(x\wedge u)
+ V_h$, 
and
as $\alpha$ 
vanishes on $\Wedge^2 V^h$,
\cref{Eqn1a} and \cref{Eqn1b} 
hold and
imply that
\begin{equation}
\label{Eqn4}
\begin{aligned}
(\alpha- \,^{g^{-1}}\alpha)(x\wedge u) +V_h
=
(\alpha'
-\chi_h(g)\ \,^{g^{-1}}
\alpha')(u)
= 0
\, ,
\end{aligned}
\end{equation}
i.e.,
$(\alpha - \,^{g^{-1}}\alpha)(x\wedge u)$
lies in 
$V_h=\FF\text{-span}\{x-\,^hx\}$.
Thus we can
define $\lambda: V \to \FF$ 
as the linear function satisfying
$$
\lambda(u)(x-\,^hx)
=(\alpha - \,^{g^{-1}}\alpha)(x\wedge u) \, 
\quad\text{for $u$ in $V^h$}
\qquad
\text{and}
\qquad
\lambda(x) = \lambda'
\,.
$$

We argue that $(\lambda \ot hg) \oplus (\alpha \ot h)$ is a cocycle by checking the conditions in \cref{JacobiGrp}.
For \cref{JacobiGrp}(1), we verify
that $\lambda(\im T)=0$.
Since $\codim V^h =1$,
$h$ is a reflection and
either $|h|=p$ or $|h|$ and $p$
are coprime for $p=\cchar \FF$ 
(see \cref{CodimSection}).
If $|h|=p$, then $h$ is  nondiagonalizable and $\lambda(\im T)=0$
 by \cref{ReflectionLemma}.
Assume now that $p$ and $|h|$
are coprime.
Then $h$ is diagonalizable
with $V^h\cap V_h=\{0\}$, and we may 
assume $(V_h)^{\perp}$ is chosen
as $V^h$
in the construction
of coset representatives 
from \cref{UniqueReps}.
We argue that $(\alpha-\, ^{g^{-1}}\alpha)(x\wedge u) = 0$ for all $u$ in $V^h$.
On one hand,
$(\alpha-\, ^{g^{-1}}\alpha)(x\wedge u)$
lies in $V_h$ by \cref{Eqn4}.
On the other hand, we claim that
$(\alpha-\, ^{g^{-1}}\alpha)(x\wedge u)$
lies in $V^h$.
By construction, $\pi_h \alpha \equiv 0$ and so
$V^h=(V_h)^{\perp}$ contains
both
$\alpha(x\wedge u)$
and 
$\alpha(\, ^gx\wedge \, ^gu)$
and also
$({\,^{g^{-1}}}\alpha)(x\wedge u)
=\,^{g^{-1}}\big(\alpha(\, ^gx\wedge \, ^gu)\big)$ as $G$ preserves
$V^h$; 
 hence the difference
$(\alpha-\, ^{g^{-1}}\alpha)(x\wedge u)$
lies in $V^h$.
Then, as $V^h \cap V_h=\{0\}$, we must have 
$(\alpha-\,^{g^{-1}}\alpha)(x
\wedge u)=0$.
Hence $\lambda(\im T)=0$
as $\im T\subset V^G\subset V^h$
and
\cref{JacobiGrp}(1) holds.

We now verify
\cref{JacobiGrp}(2),
i.e.,
\begin{equation*}
(\alpha-\, ^{g^{-1}}\alpha)
(u\wedge  v)
=
\lambda(v)(u-\, ^{h} u)
-
\lambda(u)(v-\, ^{h} v)
\quad\text{ for all }
u,v \in V
\, .
\end{equation*}
The equality holds for $u = x$
and $v \in V^h$ by definition of $\lambda$.
For $u,v \in V^h$, the right-hand-side 
 is zero and
the left side
 is zero as well
since $\alpha$
vanishes on $\Wedge^2 V^h$
by construction
and $G$ fixes $V^h$
set-wise, so
$\alpha(\,^gu\wedge \,^gv)=0$. 
For $u,v\in (V^h)^\perp$,
both sides vanish
as they are alternating in $u$ and $v$ and $(V^h)^\perp$
has dimension $1$.
Lastly, \cref{JacobiConverse} implies \cref{JacobiGrp}(3) is satisfied.
Therefore, $(\lambda \ot hg) \oplus (\alpha \ot h)$ is a cocycle by \cref{JacobiGrp}.

In fact, we observe that
 $(\lambda \ot hg) \oplus (\alpha \ot h)$ is a 
distinguished coset representative 
as in \cref{UniqueReps}:
$\pi_h\alpha\equiv 0$,
 $\alpha\equiv 0$ on $\Wedge^2 V^h$, and, whenever
 $\chi_h$ is nontrivial,
 $\FF^{\chi_h}=\{0\}$ 
 so
  $\lambda(x)=\lambda'=0$.
It is straightforward
to check that
$\Phi \Phi'=\Phi' \Phi
=1$ using \cref{UniqueReps}.

\vspace{1.5ex}

\noindent
{\bf Codimension two contributions:}
Fix $h$ in $G$ with $\codim V^h=2$ and write $V = V^h \oplus (V^h)^\perp$ with fixed basis elements $v_1$ and $v_2$ of $(V^h)^\perp$.
Then $\hat{v}_1=v_1-\, ^h v_1$ and
$\hat{v}_2=v_2-\, ^h v_2$ form a basis for $V_h$.
Define
$$
\Phi: \HH^2_{-1}(h) \longrightarrow  (V/V_h)^{\chi_{h}}
\qquad
\text{by}
\qquad
(\lambda \ot hg) \oplus (\alpha \ot h)
\mapsto 
\alpha(v_1\wedge v_2)+ V_h
\,,
$$
where $(\lambda \ot hg) \oplus (\alpha \ot h)$ is a
distinguished coset representative for $\HH^2_{-1}(h)$ 
as in \cref{UniqueReps}.
We show that $\alpha(v_1\wedge v_2)+ V_h$ is $\chi_h$-invariant and hence $\Phi$ has the indicated codomain.
First observe that
$g$ acts on the $1$-dimensional space $\Wedge^2 (V/V^h)$ by the scalar 
$\chi_h(g)=\det[g]_{V/V^h}$
and 
$\alpha(u\wedge v) = 0$ for
any $u \in V^h$ and $v \in V$,
so
\begin{equation}
\label{Eqn2a}
\alpha(\,^gv_1\wedge \, ^gv_2) =\chi_h(g) \cdot \alpha(v_1\wedge v_2)
\, 
\end{equation}
and
\begin{equation}
    \label{Eqn2b}
\begin{aligned}
\alpha(v_1\wedge v_2)
& -
\chi_h(g) \cdot
\,^{g^{-1}}(
\alpha(v_1\wedge v_2)) + V_h
=
\alpha(v_1\wedge v_2)
-
\,^{g^{-1}}(\chi_h(g) \cdot \alpha(v_1\wedge v_2))
+ V_h
\\
&=
\alpha(v_1\wedge v_2)
-\,^{g^{-1}}(\alpha(\,^gv_1
\wedge \,^gv_2))+V_h
=
(\alpha-\,^{g^{-1}}\alpha)(v_1
\wedge v_2)+V_h
\, .
\end{aligned}
\end{equation}
Then by \cref{JacobiGrp}(2),
$(\alpha-\,^{g^{-1}}\alpha)(v_1
\wedge v_2)+V_h
= 0$,
and the image of $\Phi$ is $\chi_h$-invariant.

Now we construct an inverse 
to $\Phi$,
$$
\Phi': \HH^2_{-1}(h) \longleftarrow \left(V/V_h \right)^{\chi_h},
\qquad
\big(
(\lambda \ot hg) \oplus 
(\alpha \ot h)\big) + \BBB^2_{-1}(h) \leftmapsto
v + V_h
\, ,
$$
as follows
using the projection map
$\pi_h^\perp: V \rightarrow
(V_h)^\perp$.
For any coset $v+V_h$ in $(V/V_h)^{\chi_h}$,
define the map
$\alpha: \Wedge^2 V \to V$ 
by setting 
\begin{equation}
    \label{Eqn6}
\alpha(u\wedge w)=0
\quad\text{for all $u \in V^h$, $w\in V$}
\quad\text{ and }\quad
\alpha(v_1\wedge v_2)=\pi_h^{\perp}(v)
\in (V_h)^\perp
\, .
\end{equation}
Note that this is independent
of choice of coset representative
$v$.
Before defining $\lambda$, we observe that $(\alpha - \,^{g^{-1}}\alpha)(v_1\wedge v_2)$ lies in $V_h$:
\cref{Eqn6} implies that
\cref{Eqn2a,Eqn2b}
hold and thus
$$
\begin{aligned}
(\alpha
- \,^{g^{-1}}\alpha)
(v_1
\wedge v_2) + V_h
&=
\alpha(v_1\wedge v_2)
 -
\chi_h(g)\, 
\,^{g^{-1}}(
\alpha(v_1\wedge v_2)) + V_h
\\
&=
\pi_h^\perp(v)
- \chi_h(g)\,
\,^{g^{-1}}\big(
\pi_h^\perp(v)\big)
+ V_h
= 
0
\, 
\end{aligned}
$$
as
$v+ V_h=\pi_h^\perp(v)+V_h$ is $\chi_h$-invariant
and $G$ preserves $V_h$ set-wise.
Thus we may write
$$
(\alpha - \,^{g^{-1}}\alpha)(v_1\wedge v_2)
=
a \, (v_1-\,^hv_1) - b\, (v_2-\,^hv_2)
\qquad
\text{for some $a,b \in \FF$}
\,.
$$
Define a linear function
$\lambda: V \to \FF$ by setting
$\lambda\equiv 0$ on $V^h$, 
$
\lambda(v_2) = a$,
and $
\lambda(v_1) = b
$.
We argue that $(\lambda \ot hg) \oplus (\alpha \ot h)$ is a cocycle 
in $\ZZZ^{2}_{-1}(h)$
by checking the three conditions of \cref{JacobiGrp}.
Since $\lambda \equiv 0$ on $V^h$ and $\im T \subset V^G \subset V^h$, \cref{JacobiGrp}(1) holds.
We next
verify \cref{JacobiGrp}(2), i.e.,
\begin{equation*}
(\alpha-\, ^{g^{-1}}\alpha)
(u\wedge w)
=
\lambda(w)(u-\, ^{h} u)
-
\lambda(u)(w-\, ^{h} w)
\quad\text{ for all }
u,w \in V
\, .
\end{equation*}
This holds for $u=v_1$ and $w=v_2$ by construction of $\lambda$.
It also holds when $u \in V^h$ and $w \in V$:
The right-hand side 
vanishes since $u-\,^hu=0$ and $\lambda(u)=0$ 
and the left-hand side 
vanishes as well
since 
$\alpha(u\wedge w)=0=
\alpha(\,^gu\wedge  \,^gw)$ 
by the construction of $\alpha$
as both $u$ and $\,^gu$ lie in 
$V^h$.
Thus \cref{JacobiGrp}(2) is satisfied.
\cref{JacobiGrp}(3) is satisfied by \cref{JacobiConverse}.
So $(\lambda \ot hg) \oplus (\alpha \ot h)$ is a cocycle.

Notice that  this cocycle is a distinguished coset representative as in \cref{UniqueReps}.
It is then straightforward to check that $\Phi \Phi' = \Phi' \Phi = 1$
using \cref{UniqueReps}
and \cref{JacobiGrp}.
\end{proof}



\vspace{1ex}

\begin{remark}{\em
\label{RecoverNonModResult}
We recover from the last theorem
the description of 
the graded deformation cohomology 
$\HH^2_{-1}(A)$ for $A=S(V)\rtimes G$
in the nonmodular setting.
Indeed,
for $G$ cyclic 
with $|G|$ and $\cchar \FF$ coprime
and $\FF$ algebraically closed,
\cref{MainThm} 
gives
\cref{NonModularCohCyclic}.
In this setting,
$\im T = V^G$ and $V/V_h\cong V^h$
as an $\FF G$-module.
In addition, if $h$ is a reflection, then $h$ is diagonalizable with $\chi_h(h)\neq 1$
(see \cref{LinearCharacter})
 so
$\FF^{\chi_h}=0$.
Finally, as $h$ acts on $V/V_h$
and on $(V^h)^*$ as the identity
and $\chi_h(h)\neq 1$,
no element of $V/V_h\ot (V^h)^*$
can be $\chi_h$-invariant
so the middle summand of
\cref{MainThm} vanishes.
We give a generalization 
of this phenomenon in the 
next result.
}
\end{remark}

\vspace{1ex}

\begin{cor}
\label{PartsNonmodular}
Suppose $G \subset \GL(V)$ is a cyclic group acting on  $V = \FF^n$.
Let $|G|=p^k r$ for some
$k$ with $r$ and $p=\cchar \FF$ 
coprime and suppose $\FF$ 
contains a primitive
$r$-th root of unity. Then
the space of infinitesimal
graded deformations of $A=S(V)\rtimes G$ is isomorphic
as an $\FF$-vector space
to
$$
\begin{aligned}
&
\HH^2_{-1}(A)
\ \cong\ 
(V^G/\im T)^*
\oplus
\big( 
V \ot\, \Wedge^2 V^*
\big)^G
\, \oplus\hspace{-2ex}
\bigoplus_{\substack{h \in G \\ \codim V^h =1
\\ \det(h)=1\rule{0ex}{1.5ex}}}
\hspace{-3ex}
\left(\FF \oplus
\big(V/V_h \ot (V^h)^*\big)
\right)^{\chi_h}
\hspace{-1ex}
\ \oplus\hspace{-2ex}
\bigoplus_{\substack{h \in G \\ \codim V^h =2
\\ \rule{0ex}{1.5ex}
\det (h) =1}} 
\hspace{-3ex}
\left(
V/V_h
\right)^{\chi_h}
.
\end{aligned}
$$
\end{cor}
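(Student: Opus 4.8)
The plan is to read the Corollary directly off \cref{MainThm} by simplifying its four summands one at a time, using the extra hypothesis that $\FF$ contains a primitive $r$-th root of unity — equivalently, that the semisimple part of the generator $g$ is diagonalizable over $\FF$, so that $g$ admits a Jordan canonical form over $\FF$. The summand $(V\ot\Wedge^2V^*)^G$ is identical in both statements and requires no work, so the content lies in (i) collapsing the two sums to sums over $\det(h)=1$ and (ii) replacing $(V^G/\im T)^*$ by $(V^G)^*$.

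I expect step (i) to be routine. The key observation is the identity $\chi_h(h)=\det(h)$: since $h$ fixes $V^h$ pointwise, $\det(h)=\det[h]_{V^h}\cdot\det[h]_{V/V^h}=1\cdot\chi_h(h)$. Now $h$ acts trivially on each module occurring in the codimension-one and codimension-two summands — trivially on $\FF$, on $V/V_h$ because $V_h=\im(1-h)$, and on $(V^h)^*$ because $V^h$ is fixed pointwise. Evaluating the defining $\chi_h$-invariance relation at the group element $g=h$ therefore forces each invariant $m$ to satisfy $(\det(h)-1)\,m=0$, so the whole summand for $h$ vanishes whenever $\det(h)\neq 1$. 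Hence the two sums in \cref{MainThm} collapse to sums over $h$ with $\det(h)=1$, exactly as stated. This uses nothing beyond \cref{MainThm}, the identity $\chi_h(h)=\det h$, and the trivial action of $h$, so I expect it to go through cleanly.

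The \emph{main obstacle} is step (ii): showing $\im T=\{0\}$, so that $(V^G/\im T)^*\cong(V^G)^*$. Here I would exploit the root-of-unity hypothesis to write $g=su$ with $s$ semisimple of order $r$ (diagonalizable over $\FF$) and $u$ unipotent of order $p^k$, commuting. Factoring the transfer map as $T=\big(\sum_{i=0}^{p^k-1}u^{i}\big)\big(\sum_{j=0}^{r-1}s^{j}\big)$, the semisimple factor is $r$ times the projection onto $V^{s}$ with $r\neq 0$ in $\FF$, so the problem reduces to understanding the unipotent transfer $\sum_{i=0}^{p^k-1}u^{i}$ on $V^{s}$. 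Writing $u=1+N$ with $N$ nilpotent and using $\binom{p^k}{j}\equiv 0\pmod p$ for $0<j<p^k$, this unipotent transfer collapses to $(u-1)^{p^k-1}=N^{p^k-1}$, whose image already lies in $V^{u}=\ker N$. The delicate point is to argue $N^{p^k-1}|_{V^s}=0$; the model case is \cref{ReflectionLemma}, where a nondiagonalizable reflection (a codimension-one element of determinant $1$) forces $\im T=0$. I expect controlling this unipotent contribution in general to be the subtle step: one must verify that the hypotheses genuinely rule out a surviving Jordan block of full size $p^k$ on $V^{s}$ (the kind of behavior seen for the regular representation of $\Z/p$), so I would want to isolate precisely which structural input — beyond the existence of a Jordan form — guarantees $N^{p^k-1}|_{V^s}=0$. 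Once $\im T=0$ is established, the middle summand carries over verbatim and the identification is complete.
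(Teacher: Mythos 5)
Your step (i) is correct, and it is in fact more direct than the paper's own argument. The paper's proof invokes the Jordan canonical form of the generator $g$ (this is where the primitive $r$-th root-of-unity hypothesis enters) to produce a $G$-stable complement to $V^h$ and to identify $V/V_h$ with $V^h$ as an $\FF G$-module, and only then concludes vanishing from $\chi_h(h)=\det h\neq 1$ together with the pointwise-trivial action of $h$ (as in \cref{RecoverNonModResult}). Your observation that $h$ acts trivially on $V/V_h$ \emph{directly} --- because $v-\,^{h}v\in V_h=\im(1-h)$ --- and trivially on $(V^h)^*$ and $\FF$, so that evaluating the $\chi_h$-invariance condition at the element $h$ itself forces $(\det h-1)m=0$, yields the same collapse of both sums to $\det(h)=1$ with no Jordan form and, indeed, no root-of-unity hypothesis at all. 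Your identity $\chi_h(h)=\det h$ is exactly the one the paper uses.

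Your step (ii), however, has located a genuine problem, and the obstacle you flagged cannot be removed: the stated hypotheses do \emph{not} imply $\im T=\{0\}$, so $(V^G/\im T)^*$ cannot in general be replaced by $(V^G)^*$. The ``surviving Jordan block of full size'' you worried about actually occurs: take $g$ a single unipotent Jordan block of size $p^k$ (for $k=1$ this is the regular representation of $\Z/p$), which satisfies the hypotheses with $r=1$; then $T=\sum_i g^i=(g-1)^{p^k-1}\neq 0$, and in fact $\im T=V^G$, so \cref{MainThm} gives first summand $(V^G/\im T)^*=0$ while the corollary's displayed formula gives $(V^G)^*\cong\FF$. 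The nonmodular subcase $k=0$ likewise has $\im T=V^G$ (consistent with \cref{NonModularCohCyclic}, which has no such summand). What you could not have known blind is that the paper's own proof is silent on exactly this point: it proves only the restriction of the two sums to $\det(h)=1$ and never addresses the first summand, so the replacement of $(V^G/\im T)^*$ by $(V^G)^*$ is unjustified there as well. The corollary as printed appears to need either the first summand left as $(V^G/\im T)^*$ or an additional hypothesis forcing $\im T=0$ (for instance the presence of a nondiagonalizable reflection, via \cref{ReflectionLemma}). In short: your step (i) is a complete and cleaner proof of everything the paper actually proves, and the gap you isolated in step (ii) is real --- it is a gap in the statement, not a defect of your argument.
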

%
\begin{proof}
As $\FF$ has a primitive $r$-th
root-of-unity,
we may choose a Jordan canonical
form for the generator $g$ of $G$.
Say
$\codim V^h=1$ for some $h=g^i$ in $G$ with $\det h\neq 1$. Then $h$ is diagonalizable
with a single non-$1$ eigenvalue.
Since a single Jordan block $B$
of $g$
has only one eigenvalue $\xi$ with multiplicity the size of the block
and $B^i$
has only one eigenvalue $\xi^i$
with the same multiplicity,
there must be a $1\times 1$ Jordan
block of $g$ corresponding
to an eigenvector $g$ and $h$ share 
not in $V^h$. Thus $G$ preserves set-wise both $V^h$ and 
the choice of vector space complement
$(V^h)^\perp=V_h=\im(1-h)$.
We
identify $V/V_h$
with $V^h$ 
as an $\FF G$-module 
and observe 
as in \cref{RecoverNonModResult}
that
$\chi_h(h)\neq 1$
yet $h$ fixes 
$\FF \oplus
\big(V/V_h \ot (V^h)^*\big)$
point-wise,
so $0$ is the only $\chi_h$-invariant
element
of this space.
We use a similar argument
when $\codim V^h=2$
with $\det h\neq 1$:
$h$ must have two
non-$1$ eigenvalues 
and thus there must
be one or two
Jordan blocks of $g$
 corresponding
to $V_h$, a vector space complement to $V^h$;
we again identify $V/V_h$ with $V^h$
as an $\FF G$-module
and note that $\chi_h(h)=\det h$
to conclude that the space of
$\chi_h$-invariants is the zero space.
\end{proof}

\section{Applications
to deformation theory}
\label{ApplicationSection}
Here we demonstrate how to use the description of
infinitesimal graded deformations in \cref{MainThm} to 
obtain explicit graded deformations
in a particular setting.
We use three resolutions of
$A=S(V) \rtimes G$
for a cyclic group $G$
acting on $V \cong \FF^n$
using the 
twisted product
resolutions of \cite{SW19}:
\begin{itemize}
\item
Resolution $X_{\DOT}$
is a twisting of a
periodic resolution for $\FF G$ with the Koszul resolution for $S(V)$ (see \cref{PeriodicResolutionSection}),
\item
Resolution $Y_{\DOT}$ is a twisting of
the bar resolution for $\FF G$ and the Koszul resolution for $S(V)$ (see \cite{SW15,SW19} for the differentials),
\item
Resolution $Z_{\DOT}$
is the bar resolution of $A$.
\end{itemize}
Each is an $A^e$-free resolution of $A$, 
and we make a choice of 
chain maps between the three resolutions:
\[
\begin{tikzcd}
X_{\DOT}: 
&[-5ex] 
\cdots 
\to 
&[-6ex] 
\arrow[d,  
shift right = .75ex
] 
X_3 
\arrow[r]
&[3ex]
\arrow[d, 
shift right =2.5ex] 
X_2 
\to 
&[-7ex] 
\cdots 
&[0ex] 
\text{(for finding cohomology representatives)}
\\
Y_{\DOT}: 
&[-5ex]
\cdots 
\to 
&[-6ex] 
\arrow[u, swap, 
shift right=.75ex] 
\arrow[d, swap, 
shift right = .75ex] 
Y_3 
\arrow[r] 
& 
\arrow[u, swap, 
shift right=-1ex] 
\arrow[d, swap, 
shift right = 2.5ex] 
Y_2  \to &[-7ex] 
\cdots 
&[0ex] 
\text{(for computing Gerstenhaber brackets)}
\\
Z_{\DOT}: 
&[-5ex]
\cdots 
\to &[-6ex] 
\arrow[u, swap,  
shift right =.75ex] 
Z_3 
\arrow[r] 
& 
\arrow[u, swap,  
shift right=-1ex] 
Z_2  
\to 
&[-7ex] 
\cdots
&[3ex] 
\hspace{0ex}
\text{(for describing deformations)}
\end{tikzcd}\]
Resolutions $Y_{\DOT}$ and $Z_{\DOT}$ here may be used for any finite group $G$. 
Resolution $X_{\DOT}$ is reserved for 
a finite cyclic group $G$.

\subsection*{Cyclic
transvection groups}
We now turn to the unipotent cyclic
groups acting on $V=\FF^2$, i.e.,
the cyclic transvection groups.
Consider $G=\langle g \rangle \subset\GL(V)$ 
with $\characteristic \FF=p>0$ 
and $g$ a nondiagonalizable
reflection.
Then $|G|=p$ 
and there is a basis $v_1, v_2$
of $V$ so that
$$
g(v_1) = v_1
\quad\text{ and }\quad
g(v_2)=v_1+v_2,
\qquad
\text{ i.e., }\quad
g 
= 
\left(
\begin{smallmatrix}
1 & 1 \\
0 & 1
\end{smallmatrix}
\right)
\quad\text{ and }\quad
G=\big\{ 
\left(
\begin{smallmatrix}
1 & * \\
0 & 1
\end{smallmatrix}
\right)
\big\}
\subset \GL_n(\FF)
\, .
$$
Here  $S(V)=
\mathbb{F}[v_1,v_2]$ and 
$\im T=\{0\}$ by \cref{ReflectionLemma} as $g$ itself is a nondiagonalizable reflection.
For each $h$ in $G$,
$\codim V^h\leq 1$ and
$
V^h = V_h=
V^G=\FF v_1$
whereas
$V/V_h \cong \FF v_2$.
Note that
$\chi_h:G\rightarrow \FF^{\times}$ is the trivial character
in this setting
as each $h'$ in $G$
acts trivially on each $V/V_h$. 
For $A=S(V)\rtimes G$, 
\cref{MainThm} thus implies that
\begin{equation}
\begin{aligned}
\label{TransvectionCoh}
\HH^2_{-1}(A) 
&\ \cong\ 
\underbrace{
(V^G)^* \oplus 
(V \ot \Wedge^2 V^*)^G }_{\text{contribution of $1_G$}}
\ \oplus\ 
\bigoplus_{
\substack{h \in G\ \\ h \neq 1_G}}
\underbrace{
\big( \FF \oplus
(V/V^G \ot (V^G)^*)
\big)^{G}}_{\text{\ \ 
contribution of reflections}}
\\
&\ =\ 
(\FF v_1)^* \oplus 
(\FF v_1 \ot \FF v_1\wedge v_2)
\ \oplus\ 
\bigoplus_{
\substack{h \in G\ \\ h \neq 1_G}}
\big( \FF \oplus
(V/\FF v_1 \ot (\FF v_1)^*)
\big)
\ \cong\ \FF^{2p}
\, .
\end{aligned}
\end{equation}

\subsection*{Lifting a Hochschild cocycle
to a deformation}
Consider $1 + (v_2+\FF v_1) \ot v_1^*$
in the summand
$\FF \oplus 
(V/\FF v_1 \ot (\FF v_1)^*)$ of
 $\HH^2_{-1}(A)$ in \cref{TransvectionCoh} corresponding to 
$h=g$ where $v_1^*$ is the map $v_1\mapsto 1$.
Using the isomorphism in the proof
of \cref{MainThm},
we identify this element with the cocycle $\gamma_{_X}^{\vphantom{X}}$ on $X_{\DOT}$ in $\CCC^2_{-1}(A)=
(V^* \ot \FF G)
\oplus
(V \ot \Wedge^2 V^* \ot \FF G )$ given by 
$$
\gamma_{_X}^{\vphantom{X}}(v_1) = g^2,
\qquad
\gamma_{_X}^{\vphantom{X}}(v_2)=g^2,
\qquad
\text{and}
\qquad
\gamma_{_X}^{\vphantom{X}}(v_2 \wedge v_1) 
= v_2\ot g
\,.
$$
We lift $\gamma_{_X}$
to a specific
$2$-cocycle
$\gamma=\gamma_{_Y}$
on the bar-twisted-Koszul resolution,
$$\gamma: Y_2=(\FF G \ot \FF G)
\oplus (\FF G \ot V)
\oplus (\Wedge^2 V)
\rightarrow S(V)\ot \FF G,
$$
by applying a choice of
chain map 
$X_{\DOT}\rightarrow Y_{\DOT}$:
For $0\leq i,j<|G|$,
calculations give
\begin{equation}
\label{Deformation}
\begin{aligned}
\gamma(g^i \ot g^j) = 0\, ,
\ \
\gamma(g^i \ot v_1)
=i \, g^{i+1}
\,,
\ \
\gamma(g^i \ot v_2) 
= 
\tbinom{i+1}{2}\, g^{i+1}
\,, 
\ \
\text{ and }
\gamma(v_2 \wedge v_1)
=
v_2\ot g
\end{aligned}
\, .
\end{equation}

We argue that $\gamma$ lifts to a graded deformation of 
$\FF[v_1,v_2] \rtimes G$.
We consider the lifting conditions
for $\gamma$ on $Y_{\DOT}$ (\cite[Theorem 5.3]{SW18}):
As $\dim V=2$, $\gamma$
lifts to a deformation
if and only if
$$
\text{}
\ \ 
\text{$[\gamma, \gamma]=0$ as a cochain on $Y_3$.}
$$
Here, $[\ ,\ ]$
is the Gerstenhaber bracket
on Hochschild cohomology
lifted to the resolution $Y_{\DOT}$.
Since $\gamma$ has degree $-1$,
the square bracket
$[\gamma, \gamma]$ 
is a $3$-cochain
of degree $-2$. Then as $\dim V=2$,  we need only check the value of the square bracket on $h \ot v_1 \wedge v_2$ in $\FF G \ot \Wedge^2 V$
for $h$ in $G$.
We use the formula on the right-hand side of \cite[Theorem 6.1(2)]{SW18}:
$$
\begin{aligned}
[\gamma,\gamma](g^i \ot v_1 \wedge v_2)
&
=
\gamma(\gamma(g^i \ot v_2) \ot v_1)
-\gamma(\gamma(g^i \ot v_1) \ot v_2)
+\gamma(g^i \ot  v_2)\, g
\\
&=
\tbinom{i+1}{2} \, \gamma(g^{i+1} \ot v_1)
-i\, \gamma(g^{i+1} \ot v_2)
+ \tbinom{i+1}{2}\, g^{i+2}
\\
&
=
\tbinom{i+1}{2}(i+1)\, g^{i+2}
-i \tbinom{i+2}{2} g^{i+2}
+
\tbinom{i+1}{2} \, g^{i+2}
\,.
\end{aligned}
$$
A computation shows that the right-hand side is zero and thus $[\gamma,\gamma]=0$ as a cochain. 
Hence $\gamma$
in $\HH^2(A)$
is not just an infinitesimal
graded deformation, but the infinitesimal
(first multiplication map)
of a (formal) graded deformation
of $A=S(V)\rtimes G$.

\subsection*{Graded deformation
as an explicit Drinfeld orbifold algebra}
We now give that graded deformation
explicitly.
For a pair of
linear parameter functions
$$
\lambda: \FF G \ot V \to \FF G
\qquad
\text{and}
\qquad
\kappa: \Wedge^2 V \to V \ot \FF G
\,,
$$
let $\cH_{\lambda,\kappa}$ be
the $\FF$-algebra 
(see \cite{SW18}, for example)
generated by $\FF G$ and $V$ with defining relations 
$$
h u - \,^huh = \lambda(h \ot u) 
\quad
\text{and}
\quad
uv-vu = \kappa(u \wedge v)
\quad
\text{for all $h \in G$ and $u,v \in V$}
\,.
$$
For $\gamma$ defined in \cref{Deformation}, 
now consider the 
algebra $\cH_{\lambda, \alpha}$ defined with specific parameters
$$
\lambda=\gamma\big\vert_{\FF G \ot V} 
\quad
\text{ and } 
\quad 
\alpha=\gamma
\big\vert_{  \scalebox{.75}{$\Wedge^2 V$} }\, ,
\qquad\text{ so }
\gamma=\lambda \oplus \alpha
\, .
$$ 
We identify
$\alpha$ with $\kappa^L$ and set $\kappa^C$ to zero in \cite[Theorem 6.1]{SW18}
and check the six conditions of that theorem  to
conclude that $\cH_{\lambda,\alpha}$
satisfies the PBW property
and is isomorphic to
$\FF[v_1,v_2] \rtimes G$
as a vector space.
Thus the 
$\FF$-algebra
$
\cH_{\lambda,\alpha} 
$ generated
by $\FF G$ and $V$
with relations
$$gv_1-v_1g=g^2,\quad gv_2-v_1g-v_2g=
0,\quad
\text{ and }\quad
v_2v_1-v_1v_2=v_2g 
$$
is a PBW deformation of $\FF[v_1,v_2] \rtimes G$.
This analog of a universal enveloping algebra is called a {\em Drinfeld orbifold algebra}.
%
%


\end{document}